\colorlet{shadecolor}{black!25!white} % Provisional for frames
\newcommand{\Reals}{\mathbb{R}}  
\newcommand{\R}{\mathbb{R}}
\newcommand{\E}{\operatorname{E}}
\newcommand{\erfc}{\operatorname{erfc}}
\newcommand{\erf}{\operatorname{erf}}
\newcommand{\ind}{\mathbf{1}}
\newtheorem{thm}{Theorem}[section]
\newtheorem{teo}{Teorema}[section]
\newtheorem{prop}[teo]{Proposition}
\newtheorem{example}[thm]{Example}
  \newcommand{\condprob}[2]{\raise2pt
    \hbox{%
      \mathsurround=0pt$#1$}
    %\raise0pt\hbox{/}
    \ \raise-1pt\hbox{\scalebox{1.2}[1.5]{/}}\,% 
    \raise-2pt
    \hbox{%
      \mathsurround=0pt$#2$}
  }
\newlength\savedwidth
\renewcommand{\tablename}{\small Table}
\begin{document}  
\title{{On the minimum of a conditioned Brownian bridge}
%\thanks{Grants or other notes}
}
\author{Aureli Alabert \\ %\at
           Department of Mathematics \\
           Universitat Aut\`onoma de Barcelona \\  
           08193 Bellaterra, Catalonia \\  
           \url{Aureli.Alabert@uab.cat}  
           \and
           Ricard Caballero \\ %\at
           Department of Mathematics \\
           Universitat Aut\`onoma de Barcelona \\  
           08193 Bellaterra, Catalonia \\  
           \url{rcaballero@mat.uab.cat}
}

\date{\today}
\thispagestyle{empty}
\maketitle

\begin{abstract}  
 We study the law of the minimum of a Brownian bridge, conditioned to take
 specific values at specific points, and the law of the location of the minimum. 
 They are used to compare some non-adaptive optimisation algorithms for black-box
 functions for which the Brownian bridge is an appropriate probabilistic model 
 and only a few points can be sampled.
\par
\medskip
\textbf{Keywords:} Black-box optimisation, Brownian bridge, simulation.
\par
\textbf{Mathematics Subject Classification (2010):}  90C26, 60J65, 65C05
\end{abstract}

\section{Introduction}

We study the law of the minimum of a Brownian bridge conditioned to pass through given points
in the interval $[0,1]$, and the location of this minimum. Our motivation is the investigation 
of the performance of algorithms based on 
probabilistic models in expensive black-box optimisation. 

The probabilistic model point of view assumes the existence of a probability space from where the function
at hand has been drawn. The choice of points to sample is guided by the probabilistic properties
of this random function. Eventually, the values of the function at the points 
already sampled can be used to
decide the next sampling point (\emph{adaptive algorithms}) or can be neglected (\emph{non-adaptive} or 
\emph{passive algorithms}).

We assume here that the probabilistic model is completely specified, and given by the standard Brownian bridge
on the interval $[0,1]$;
that means, the function to be optimised is a path of a standard Brownian motion process, 
conditioned to take certain values $x_0$ at $t=0$ and $x_1$ at $t=1$.
More generally, one could set up a \emph{statistical model} (a family of probabilistic models depending on some parameters)
and improve sequentially the knowledge of the parameters using the values observed while sampling.

Probabilistic models try to account for heavy multimodality in the 
objective function. 
The irregularity and the 
independence of values over disjoint intervals of the Brownian bridge
and other Markovian stochastic processes represent well this multimodality, although at a 
very local scale the functions found in practice are usually smooth.

Our main interest is in expensive black-box functions from which only a few points
can be sampled, where it is more important to have an estimation of the absolute error incurred 
in approximating the true minimum
than the convergence, the speed of convergence, or the complexity properties of the algorithm.

In this paper we establish some facts about the law of the minimum of a Brownian bridge on the interval $[0,1]$, 
conditioned to hit some points $(t_i,x_i)\in(0,1)\times\Reals$. The density function of the law can be
computed exactly, but we argue that it is better to use simulation to obtain its features. 
We then use these simulations to evaluate empirically the performance of three non-adaptive algorithms 
when only small samples are allowed. Two of them are very simple and known: pure random sampling
and sampling at equidistant points. We propose a third one, which performs better in the present setting.
New adaptive algorithms in the same setting will be presented and compared elsewhere. 

The Brownian bridge model in optimisation has been studied by several authors, from the point of 
view of the asymptotic properties of the algorithms (see, e.g. Locatelli \cite{MR1432103}, 
Ritter \cite{MR1085383}, Calvin \cite{MR1325821, MR1459267, MR2086951}).
We mention here just two facts: 
\begin{enumerate}
  \item
\emph{Long-run performance:} Sampling at $n$ equidistant points and taking the value of the best 
sampled point as the approximation of the true minimum has an absolute error whose expectation 
is $O(1/\sqrt{n})$.
The best adaptive algorithm is better than the best non-adaptive algorithm concerning improvement rates, 
but asymptotically both are $O(1/\sqrt{n})$. Thus, sampling at equidistant points is
optimal in the long run.   
  \item
\emph{Complexity:} For algorithms using $n$ function evaluations, the convergence to zero 
of the mean error cannot be $O(e^{-cn})$ for any constant $c$. (This convergence order is indeed attained in 
unimodal functions, for example by Fibonacci search.)
\end{enumerate}
A general survey of probabilistic methods for optimisation can be found in Zhigljavsky and {\v{Z}}ilinskas \cite[ch.~4]{MR2361744}.

We establish some notations and preliminaries in Section \ref{sec:prelim}. Section \ref{sec:Pi} is devoted to 
computing the probability that the minimum lies in a given interval determined by two of the 
conditioning points. In Section \ref{sec:simmin} we show how to simulate the law of the
global minimum of the process. In Section \ref{sec:naopt}
we test and compare the three non-adaptive algorithms from the point of view of the
expected difference between the best sampled point and the true minimum of the path, 
when the evaluation points are few; we also present an empirical sensitivity analysis 
when the underlying model is an Ornstein--Uhlenbeck bridge instead of a Brownian bridge. 
Finally, in Section \ref{sec:SimLoc}, we 
compute the conditional distribution of the location of the minimum of a single Brownian bridge 
given the value of this minimum, and we show how to use it to simulate the location of the minimum
of the whole process.

\section{Preliminaries}\label{sec:prelim}

In the sequel, for a given stochastic process 
$Z:=\{Z_t,\ t\in I\}$, defined on a closed interval $I\subset\R$, 
we denote by 
\begin{equation*}
 m(Z):=\min_{t\in I}Z_t
 \qquad \text{and}\qquad
 \theta(Z):=\arg \min_{t \in I}Z_t 
\end{equation*}
the random variables giving the minimum value of $Z$ and its location, respectively. In the cases we will treat
here, the minimum exists and is unique with probability 1 but, to avoid any ambiguity, one can assume that $\theta(Z)$ 
is the first point where the minimum is achieved.

A standard Brownian motion $W$ on the interval $[t_0,t_1]$, starting at $(t_0,a)$, $a\in\Reals$, is a Markov 
stochastic process with continuous paths, defined 
by the transition probability 
\begin{equation*}
p_{s,r}(x,y)= \frac{1}{\sqrt{2\pi(r-s)}}\exp\Big\{\frac{-(y-x)^2}{2(r-s)}\Big\}
\ ,\quad
t_0\le s<r\le t_1
\ ,
\end{equation*}
and such that $W_{t_0}=a$ with probability 1. 

A Brownian bridge $B$ starting at $(t_0,a)$ and ending at $(t_1,b)$ has the law of a Brownian motion 
defined on the time interval $[t_0,t_1]$ starting at $(t_0,a)$ and conditioned to take the value $b$ at $t_1$. 
The random variable $B_t$, $t_0<t<t_1$, is Gaussian with mean
 $a+\frac{t-t_0}{t_1-t_0}(b-a)$ and variance $\frac{(t-t_0)(t_1-t)}{t_1-t_0}$ .
 
The following results are known or easily deduced (see e.g. Karatzas and Shreve \cite[Sec. 2.8]{MR1121940}):
\begin{prop}
Let $W$ be a Brownian motion starting at $(t_0,a)$, 
defined on the interval $[t_0,t_1]$. The density function of its minimum $m(W)$ is given
by
\begin{equation}\label{eq-fBM}
  f_{m(W)}(y)=\sqrt{\frac{2}{\pi (t_1-t_0)}}
  \exp\Big\{\frac{-(a-y)^2}{2(t_1-t_0)}\Big\}
  \ind_{\{y<a\}}
  \ .
\end{equation}

Let $B$ be a Brownian bridge from  $(t_0,a)$ to $(t_1,b)$.  
The density function ot its
minimum $m(B)$ is given by
\begin{equation}\label{eq-fbridge}
  f_{m(B)}(y)=\frac{2}{t_1-t_0}(a+b-2y)
  \exp\Big\{\frac{-2(a-y)(b-y)}{t_1-t_0}\Big\}\ind_{\{y<a,\, y<b\}}
  \ .
\end{equation}
\qed
\end{prop}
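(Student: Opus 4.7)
Both formulas can be derived from the reflection principle for Brownian motion. By space--time translation it suffices to treat the case $t_0=0$, $a=0$; writing $T:=t_1-t_0$, the general statements follow by shifting back.

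For the unconditioned minimum, the reflection principle pairs each Brownian path that hits the level $y\le 0$ and ends above $y$ with its reflection after the hitting time, which ends below $y$. This gives $P(m(W)\le y)=2P(W_T\le y)$, and differentiating in $y$ yields~(\ref{eq-fBM}).

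For the bridge, the plan is to obtain first the joint law of $(m(W),W_T)$ and then condition on $W_T=b$. A refined reflection argument shows, for $y\le 0$ and $z\ge y$,
\[
P(m(W)\le y,\,W_T\ge z)=P(W_T\le 2y-z).
\]
Differentiating in $z$ produces the joint density $g(y,z)=p_{0,T}(0,z)-p_{0,T}(0,2y-z)$ on $\{y\le 0,\,z\ge y\}$. Since the Brownian bridge from $0$ to $b$ is the regular conditional law of $W$ given $W_T=b$, one obtains, for $y\le 0$ and $y\le b$,
\[
P(m(B)>y)=\frac{p_{0,T}(0,b)-p_{0,T}(0,2y-b)}{p_{0,T}(0,b)}=1-\exp\Big(-\frac{2(-y)(b-y)}{T}\Big),
\]
after simplifying the quotient of Gaussians via the identity $(2y-b)^2-b^2=4y(y-b)$. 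Differentiating in $y$ and undoing the translation reproduces~(\ref{eq-fbridge}); the indicator $\ind_{\{y<a,\,y<b\}}$ records the two constraints that enter the derivation.

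The main technical step is the second reflection argument, which needs care about the set on which the path bijection is defined and about justifying the differentiation in $z$ to pass from the tail probability to a joint density. Once the joint density is in hand, only routine algebra and a single derivative remain.
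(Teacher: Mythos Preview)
Your reflection-principle argument is correct and is exactly the standard route; the paper itself gives no proof but simply cites Karatzas--Shreve, Section~2.8, where essentially this derivation appears. Two minor remarks. First, what you call the ``joint density'' $g(y,z)=p_{0,T}(0,z)-p_{0,T}(0,2y-z)$ is not the joint density of $(m(W),W_T)$ but rather the sub-probability density $P\{m(W)>y,\ W_T\in dz\}/dz$; this is only a naming slip, and the conditioning step that follows uses precisely this quantity, so nothing is affected. Second, carrying your differentiation through for the unconditioned minimum yields
\[
f_{m(W)}(y)=\sqrt{\frac{2}{\pi(t_1-t_0)}}\,\exp\Big\{\frac{-(a-y)^2}{2(t_1-t_0)}\Big\}\ind_{\{y<a\}},
\]
so the factor $(t_1-t_0)$ printed in~(\ref{eq-fBM}) is a typo in the paper (it should be $(t_1-t_0)^{-1/2}$, as one also sees by checking that the density integrates to~$1$); your method produces the correct normalisation.
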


Given $0=t_0<t_1<\cdots<t_n<t_{n+1}=1$, and real values $x_0,\dots,x_{n+1}$, we are interested in 
a stochastic process $X:=\{X_t,\ \in [0,1]\}$ 
whose law is that of a Brownian bridge starting
at $(t_0,x_0)$, ending at $(t_{n+1},x_{n+1})$, and conditioned to pass through all the intermediate points 
$(t_i,x_i)$, $i=1,\dots,n$. 

This process can be thought as the concatenation of $n+1$ independent Brownian bridges 
$B^i:= \{B^i_t,\ t\in[t_i,t_{i+1}]\}$, with respective end values $x_i$ and $x_{i+1}$.
In the optimisation application that we have in mind, the interior points $t_1,\dots,t_n$
are the points sampled by the algorithm, and $x_1,\dots,x_n$ are the observed values at those points.

The law of the minimum of the process $X$ can be expressed in terms of the law of the
minimum of its pieces, in the usual way. Despite the mutual independence of the 
Brownian bridges, this cannot be simplified further:
\begin{prop}
  Let $X$ be the conditioned Brownian bridge defined above,
  and $m(X)$ its minimum. Then, for all $y\in\R$, 
 \begin{equation}\label{eq-1-FmX}
   P\big\{ m(X)>y\big\} =
\prod_{i=0}^n 
\Big(1-\exp\Big\{\frac{-2(x_{i+1}-y)(x_{i}-y)}{t_{i+1}-t_{i}}\Big\}\Big)
 \ind_{\{y<\min(x_{0},\dots,x_{n+1})\}}
 \ .
 \end{equation}

\end{prop}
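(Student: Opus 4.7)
The plan is to exploit the decomposition of $X$ into $n+1$ independent Brownian bridges, already noted in the excerpt. Writing $X$ as the concatenation of $B^0,\dots,B^n$, where $B^i$ is a Brownian bridge on $[t_i,t_{i+1}]$ from $x_i$ to $x_{i+1}$, we have
\begin{equation*}
  m(X)=\min_{0\le i\le n} m(B^i)\ ,
\end{equation*}
so by mutual independence of the $B^i$,
\begin{equation*}
  P\{m(X)>y\}=\prod_{i=0}^n P\{m(B^i)>y\}\ .
\end{equation*}
It remains to compute the tail probability $P\{m(B^i)>y\}$ for a single piece, and this immediately reduces the statement to a one-shot calculation with the density (\ref{eq-fbridge}).

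For a Brownian bridge $B$ from $(t_0,a)$ to $(t_1,b)$, the key observation is that the density (\ref{eq-fbridge}) is a perfect derivative: setting $g(z):=-\frac{2(a-z)(b-z)}{t_1-t_0}$, one checks that $g'(z)=\frac{2(a+b-2z)}{t_1-t_0}$, whence
\begin{equation*}
  f_{m(B)}(z)=\frac{d}{dz}\exp\{g(z)\}\qquad\text{for }z<\min(a,b)\ .
\end{equation*}
For $y<\min(a,b)$ we therefore get
\begin{equation*}
  P\{m(B)>y\}=\int_y^{\min(a,b)} f_{m(B)}(z)\,dz
  =\exp\{g(\min(a,b))\}-\exp\{g(y)\}
  =1-\exp\Big\{\frac{-2(a-y)(b-y)}{t_1-t_0}\Big\}\ ,
\end{equation*}
since one of the factors $(a-z)$ or $(b-z)$ vanishes at $z=\min(a,b)$. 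Applying this to each $B^i$ with $(a,b,t_0,t_1)=(x_i,x_{i+1},t_i,t_{i+1})$ and multiplying yields exactly (\ref{eq-1-FmX}) on the event $\{y<\min(x_0,\dots,x_{n+1})\}$; on the complementary event at least one factor vanishes, matching the indicator.

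There is no real obstacle here: the only point requiring care is justifying the concatenation and independence statement (which relies on the Markov property of Brownian motion and is already invoked by the authors in the paragraph preceding the proposition) and recognising the density as an exact derivative so that the integration is immediate. Once those two observations are in place the proof is a line of algebra.
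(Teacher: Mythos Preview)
Your proof is correct and follows essentially the same approach as the paper: both use the independence of the pieces $B^0,\dots,B^n$ to write $P\{m(X)>y\}=\prod_i P\{m(B^i)>y\}$ and then invoke the density (\ref{eq-fbridge}) for each factor. You go a bit further than the paper by explicitly carrying out the integration (recognising $f_{m(B)}$ as the derivative of $\exp\{g(z)\}$), whereas the paper simply points to the density and leaves the computation of the distribution function implicit, but this is a difference in level of detail, not in method.
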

\begin{proof}
  The formula comes from the standard computation of the law of the minimum of several independent 
  random variables:
 \begin{equation*}
   F_{m(X)}(y) = 1- \prod_{i=0}^n (1-F_{m(B^i)}(y))
   \ ,
 \end{equation*}
  where $F_{m(X)}$ is the distribution function of $m(X)$, and 
  $F_{m(B^i)}$ is the distribution function of the minimum of the Brownian bridge $B^i$, 
  whose density is given by (\ref{eq-fbridge}), adjusting the 
  appropriate constants. 
\end{proof}
  Note that in the case when we do not condition to the end point $(t_{n+1},x_{n+1})$, we obtain a 
  similar expression where, according to (\ref{eq-fBM}), the $n$-th factor in (\ref{eq-1-FmX}) is 
  replaced by 
 \begin{equation*}
  1-\int_{-\infty}^y \sqrt{\frac{2}{\pi (1-t_n)}}
  \exp\Big\{\frac{-(x_n-z)^2}{2(1-t_n)}\Big\}\, dz
  \ .
 \end{equation*}
It would not be difficult to deal with this situation separately
(a conditioned Brownian motion),  
but we will keep our assumptions for simplicity. Moreover, sampling at $t=1$ reverts to our case.

It is natural to try to compute explicitly the density $f_{m(X)}$ of the minimum of $X$ by 
conditioning to each of the intervals $[t_i,t_{i+1}]$:
\begin{equation*}
  f_{m(X)}(y)
  =
  \sum_{i=0}^n
  P\{\theta(X)\in[t_i,t_{i+1}]\}
  \cdot 
  f_{m(X)_{|_{\theta(X)\in[t_i,t_{i+1}]}}}(y)
  \ .
\end{equation*}
  Even though, as we will see, the probability of $\theta(X)$ lying in a given interval can
  be, in principle, computed exactly, the conditional densities in the second factors still depend on the 
  process outside $[t_i,t_{i+1}]$; thus they are not simply densities of the minimum of a 
  single Brownian bridge.

\section{Probability that ${\theta(X)}$ belong to $[t_i,t_{i+1}]$}\label{sec:Pi}
\subsection{Analytical formulae}

The probability that the minimum of $X$ is achieved in one of the intervals $[t_i,t_{i+1}]$ can
be computed exactly:
\begin{prop}
  The probability that the minimum of the process $X$ is located in the interval $[t_i, t_{i+1}]$
  is given by:
 \begin{equation} 
 \begin{split}\label{PA_i}
   P\big\{\theta(X)\in [t_i,t_{i+1}]\big\}=
   \int_{-\infty}^{\min(x_0,\dots,x_{n+1})}
   &\frac{2}{t_{i+1}-t_{i}}(x_i+x_{i+1}-2y)
   \exp\Big\{\frac{-2(x_i-y)(x_{i+1}-y)}{t_{i+1}-t_i}\Big\}
   \times
   \\
   &\prod_{j\neq i} 
  \Big(1-\exp\Big\{\frac{-2(x_{j}-y)(x_{j+1}-y)}{t_{j+1}-t_{j}}\Big\}\Big)
  \, dy
  \ .
 \end{split}
 \end{equation}

\end{prop}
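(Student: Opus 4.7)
The plan is to exploit the decomposition of $X$ into the concatenation of the independent Brownian bridges $B^0,\dots,B^n$ and to condition on the value of the minimum in the relevant subinterval. Since the minima $m(B^j)$ are mutually independent random variables, and since (with probability one) all of them are distinct, the event $\{\theta(X)\in[t_i,t_{i+1}]\}$ coincides (almost surely) with the event $\{m(B^i)<m(B^j)\ \text{for all } j\ne i\}$.

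First I would write
\begin{equation*}
  P\{\theta(X)\in[t_i,t_{i+1}]\}
  =\int_{-\infty}^{\infty} f_{m(B^i)}(y)\,\prod_{j\ne i} P\{m(B^j)>y\}\,dy,
\end{equation*}
using independence and the standard formula for the probability that a given one of a collection of independent continuous random variables is the smallest. Next, I would plug in the density $f_{m(B^i)}$ from (\ref{eq-fbridge}), suitably shifted to the interval $[t_i,t_{i+1}]$ with endpoint values $x_i,x_{i+1}$, which produces exactly the first factor appearing inside the integrand of (\ref{PA_i}).

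For each $j\ne i$, I would then evaluate $P\{m(B^j)>y\}=1-F_{m(B^j)}(y)$. Since $B^j$ is a Brownian bridge from $(t_j,x_j)$ to $(t_{j+1},x_{j+1})$, integrating the density (\ref{eq-fbridge}) over $(-\infty,y]$ (a routine direct antiderivative, because the exponent is exactly the primitive one needs) gives
\begin{equation*}
P\{m(B^j)>y\}=1-\exp\Big\{\frac{-2(x_j-y)(x_{j+1}-y)}{t_{j+1}-t_j}\Big\}
\quad\text{for } y<\min(x_j,x_{j+1}),
\end{equation*}
and $P\{m(B^j)>y\}=0$ for $y\ge\min(x_j,x_{j+1})$, since the bridge necessarily passes through its endpoints. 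Substituting these survival probabilities into the product yields exactly the factors in (\ref{PA_i}).

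Finally I would collect the indicator functions. The density $f_{m(B^i)}(y)$ already vanishes for $y\ge\min(x_i,x_{i+1})$, and the survival factors for $j\ne i$ vanish for $y\ge\min(x_j,x_{j+1})$; combining all these restrictions, the integrand is supported on $\{y<\min(x_0,\dots,x_{n+1})\}$, which gives the upper limit of integration in (\ref{PA_i}). The routine step which I would not detail is the antiderivative computation for $F_{m(B^j)}(y)$; the only conceptually delicate point is the almost-sure distinctness of the $m(B^j)$, which is immediate because each $m(B^j)$ has a density with respect to Lebesgue measure and the bridges are independent.
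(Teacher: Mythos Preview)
Your proposal is correct and follows essentially the same route as the paper: both identify $\{\theta(X)\in[t_i,t_{i+1}]\}$ with $\{m(B^i)<m(B^j),\ \forall j\neq i\}$, use the independence of the $m(B^j)$ to write the probability as $\int f_{m(B^i)}(y)\prod_{j\neq i}(1-F_{m(B^j)}(y))\,dy$, and then substitute the explicit densities and distribution functions from (\ref{eq-fbridge}). Your write-up is in fact a bit more careful about the almost-sure distinctness of the minima and the collection of the indicator constraints into the upper limit of integration.
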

\begin{proof} 
  The random variables $m(B^0),\dots, m(B^n)$ are independent, because of
  the Markov property of Brownian motion. Therefore, their joint density is given by
  the product $\prod_{i=0}^n f_{m(B^i)}(y_i)$, where $f_{m(B^i)}$ is the density of the minimum of the
  $i$-th bridge.
  Denoting, for simplicity, $f_i:=f_{m(B^i)}$ and $F_i$ the corresponding distribution function,
\begin{align*}
  P\big\{\theta(X)\in [t_i,t_{i+1}]\big\} 
  &=
  \int_{\prod\limits_{\stackrel{j=0}{j\neq i}}^n \{y_i<y_j \} }
  f_i(y_i)\times \prod\limits_{\stackrel{j=0}{j\neq i}}^n f_j(y_j) \, dy_0\cdots dy_n
  \\ &=
  \int_{-\infty}^{\infty}   f_i(y_i) \times
  \Big({\prod\limits_{\stackrel{j=0}{j\neq i}}^n}
  \int_{y_i}^{\infty} f_j(y_j)\, dy_j
  \Big)
  \, dy_i
  = 
  \int_{-\infty}^{\infty}   f_i(y) 
  \times{\prod\limits_{\stackrel{j=0}{j\neq i}}^n}
  \big(1-F_j(y)\big)
  \, dy 
  \ .
\end{align*}
  Now, the result 
  is obtained using the densities (\ref{eq-fbridge}) and their distribution functions.
\end{proof}

The integral in (\ref{PA_i}) can be obtained analytically using a computer algebra system.
It is a long expression that
we will not copy here. Let us compare, instead, the minimum on two different intervals:

Let $t_1<t_2\le t_3<t_4$ and consider the Brownian bridge $B_1$ from $(t_1,x_1)$ to 
$(t_2,x_2)$ and the Brownian bridge $B_2$ from $(t_3,x_3)$ to $(t_4,x_4)$. 
Denote $\ell_1:=t_2-t_1$, $d_1:=|x_2-x_1|$, $\ell_2:=t_4-t_3$, $d_2:=|x_4-x_3|$, and 
$\xi:=x_3\wedge x_4-x_1\wedge x_2$. See Figure \ref{fig:two_int}.

\begin{figure}
\begin{center}
\input{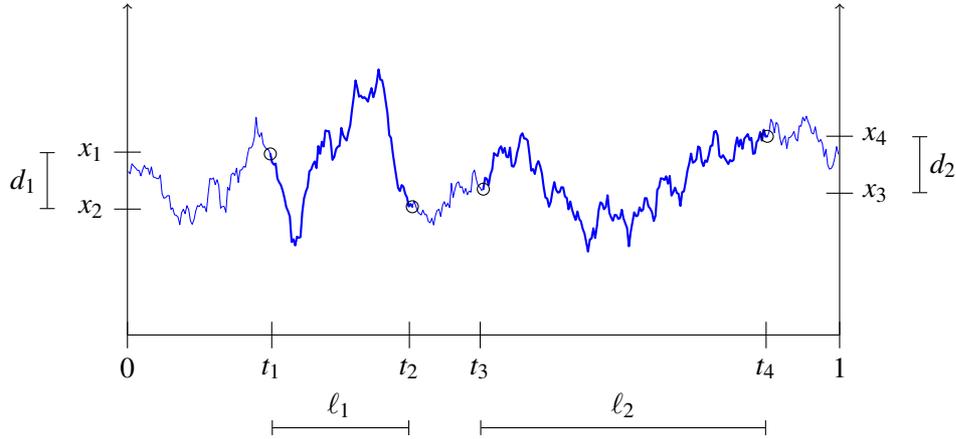}  %\begin{tikzpicture} inside

% \draw [help lines, pink] (0,0) grid (400, 300);
  %AXES:
   %\draw [<->] (59.5,250) -- (59.5,125) -- (350,125);
   \draw [<-] ( 59.5, 250) -- ( 59.5, 125);
   \draw [-]  ( 59.5, 125) -- ( 326, 125);
   \draw [<-] ( 326, 250) -- ( 326, 125);  
  %TICK MARKS
   \draw [-] ( 113.5, 120) node [below] {$t_1$} -- ( 113.5, 130); % +15 -1 -0.5
   \draw [-] ( 165, 120) node [below] {$t_2$} -- ( 165, 130); % +15 
   \draw [-] ( 191.5, 120) node [below] {$t_3$} -- ( 191.5, 130); % -10 +1 +0.5
   \draw [-] ( 298.5, 120) node [below] {$t_4$} -- ( 298.5, 130); % +50 -1 -0.5
   \draw [-] ( 59.5, 120) node [below] {$0$} -- ( 59.5, 130);
   \draw [-] ( 326, 120) node [below] {$1$} -- ( 326, 130);
   
   \draw [-] ( 54.5, 172.5) node [left] {$x_2$} -- ( 64.5, 172.5);
   \draw [-] ( 54.5, 194) node [left] {$x_1$} -- ( 64.5, 194);
   \draw [-] ( 321, 178.5) -- ( 331, 178.5)  node [right] {$x_3$};
   \draw [-] ( 321, 200) -- ( 331, 200) node [right] {$x_4$};
  %BOUNDS   
   \draw [|-|] ( 113.5, 90) -- ( 138.75, 90) node [above] {$\ell_1$} -- ( 165, 90);
   \draw [|-|] ( 191.5, 90) -- ( 245, 90) node [above] {$\ell_2$} --( 298.5, 90);
   \draw [|-|] ( 29.5, 172.5) -- ( 29.5, 183.25) node [left] {$d_1$} -- ( 29.5, 194);
   \draw [|-|] ( 356, 178.5) -- ( 356, 189.25) node [right] {$d_2$} -- ( 356, 200);
\end{tikzpicture}
\end{center}
\caption{A path of Brownian motion conditioned to the circled points. Which of the intervals $[t_1,t_2]$ and $[t_3,t_4]$ 
  is more likely to attain the lowest value?}\label{fig:two_int}
\end{figure}

We ask ourselves which of the two intervals $[t_1,t_2]$ and $[t_3,t_4]$ is more likely to 
contain the lowest value. We have
\begin{align*}
  P\{m(B_1)<m(B_2)\}
  &=
  \int_{\{y<\bar y\}} f_{m(B_1)}(y) f_{m(B_2)}(\bar y)\,dy\,d\bar y
  \\ &=
  \int_{-\infty}^{\infty} f_{m(B_1)}(y) \Big(\int_y^\infty f_{m(B_2)}(\bar y)\,d\bar y\Big)\,dy
  \ .
\end{align*}
Taking as new variables $y-x_0\wedge x_1$ instead of $y$, and $\bar y-x_2\wedge x_3$ instead of $\bar y$, 
we get 
\begin{align*}
  &\int_{-\infty}^{\xi\wedge 0} \frac{2}{\ell_1}(d_1-2y)\exp\Big\{\frac{2y(d_1-y)}{\ell_1}\Big\}
  \Big( 
  \int_{y-\xi}^0 \frac{2}{\ell_2}(d_2-2\bar y)\exp\Big\{\frac{2\bar y(d_2-\bar y)}{\ell_2}\Big\}\,d\bar y\Big)\,dy
  \ ,
\end{align*}
  which can be written
\begin{align}\label{eq:pA1}  
  &
  \int_{-\infty}^{\xi\wedge 0} \frac{2}{\ell_1}(d_1-2y)\exp\Big\{\frac{2y(d_1-y)}{\ell_1}\Big\}
  \Big( 
  1-\exp\Big\{\frac{2(y-\xi)(d_2-(y-\xi))}{\ell_2}\Big\}\Big)\,dy
  \ .
\end{align}
  This integral is also computable analytically.
  Its value depends on five parameters $(\ell_1,d_1,\ell_2,d_2,\xi)$, which are independent from
  each other in a general setting. Therefore, there is no easy way to tell if it is more likely 
  to find the minimum in one interval
  or the other. One observes, as the intuition suggests, that the above probability is
  an increasing function of $\ell_1$, $d_2$ and $\xi$, and that is decreasing in $\ell_2$ and $d_1$, 
  when all the other parameters are fixed.
    
In the case when the intervals are $[0,t_1]$ and $[t_1,1]$, then $\ell_2=1-\ell_1$, and 
$\xi$ can be expressed in terms of $d_1$ and $d_2$, in different ways according to the relative 
positions $x_0<x_1<x_2$, $x_0<x_2<x_1$, or $x_1<x_0\wedge x_2$, so that the number of free parameters reduces 
to three.  

\begin{example}
  Let $B_1$ be the bridge from $(0,0)$ to $(0.5,0)$, and $B_2$ the bridge from $(0.5,0)$ to $(1,d_2)$, 
  for $d_2\ge 0$, and set $p:=P\{m(B_1)<m(B_2)\}$. 
  The following table illustrates how $p$ and $d_2$ are 
  related.
  
  \extrarowheight=1pt
\begin{center}
  \begin{tabular}{|l|c|}
    \hline
    \hfil $p$\hfil  & $d_2$  \\
    \hline
    $0.5$ & $0.0000$ \\ 
    $0.6$ & $0.1837$ \\ 
    $0.7$ & $0.4386$ \\ 
    $0.8$ & $0.8384$ \\ 
    $0.9$ & $1.6620$ \\ 
    $0.95$ & $2.7302$ \\ 
    $0.99$ & $6.8638$ \\ 
    \hline
  \end{tabular} 
\end{center}  
In fact, the explicit functional relationship is given by 
$p=\frac{1}{2}+\sqrt{\pi/8}d_2\exp\{d_2^2/2\}\big(1-\erf\{d_2/\sqrt{2}\}\big)$,
where $\erf()$ is the standard error function. 
If we keep the same first bridge, and make the second shorter and ending at zero, say from 
$(1-\ell_2, 0)$ to $(1,0)$, the dependence between $p$ and the length $\ell_2$ is 
even easier: $p=1/(2\ell_2+1)$. Both are straightforward computations from expression 
(\ref{eq:pA1}). 

By equating both expressions one obtains the variations in $d_2$ and $\ell_2$
that give an equivalent raise of the probability that the first interval contain the 
lowest value.
\end{example}

\subsection{Approximate computation}\label{sec:approxPi}
 
Despite the fact that the integrals (\ref{PA_i}) can be computed analytically, 
the time needed to solve them grows exponentially in the number $n$ of intervals. Indeed,
 the exact computation involves decomposing the integrand in the sum of $O(2^n)$ terms. 
 Each term has an elementary primitive, 
 but 
in an optimisation procedure in which more and more points are sampled,
and consequently the Brownian bridge is conditioned to one more point each time, the computation
becomes cumbersome very quickly.
For example, with just 8 intervals, the computer algebra system \verb|maxima| takes more than
three hours to obtain the result, in an Intel i7 CPU with plenty of memory at its disposal (although
\verb|maxima| only uses one of its cores).  
 It is therefore 
 justified to resort to an approximate method.

 We remark that adding one more point to the set of conditioning
points (that means, splitting one of the intervals in two), forces to recompute from scratch the
probabilities of all intervals. There seems to be no way to reuse previous computations.

As we have seen, the probabilities $P\{m(B^i)<m(B^j)\}$, for each pair of indices $i, j$, 
can be computed exactly and more easily than (\ref{PA_i});
nevertheless, they are not useful even 
to find the interval with the maximal probability. 
An interval $[t_i,t_{i+1}]$ may satisfy
$P\{m(B^i)<m(B^j)\}>1/2$, $\forall j\neq i$, and still not be the interval with the largest
probability of containing $m(X)$. For instance, if we condition the Brownian motion to 
pass through the points
\begin{equation*}
(0, 0),\ (0.144, 0.225),\ (0.610,0.344),\ (1,0.145) \ ,
\end{equation*}
we find that $P\{m(B^1)<m(B^2)\}= 0.5436$ and $P\{m(B^1)<m(B^3)\}= 0.5198$. However, the first interval
is the least
probable one to contain the minimum:
\begin{equation*}
P_{\theta(X)}([t_0,t_1])=0.3124\ ,
\quad
P_{\theta(X)}([t_1,t_2])=0.3374\ ,
\quad
P_{\theta(X)}([t_2,t_3])=0.3502\ .
\end{equation*}
Even more, such a ``winning'' interval may not exist. For instance, conditioning to 
\begin{equation*}
(0,0),\ (0.392,0.031),\ (0.594,-0.157),\ (1,0.435)\ , 
\end{equation*}
one gets the circular relation
$P\{m(B^1)<m(B^2)\}=0.5018$, $P\{m(B^2)<m(B^3)\}=0.5032$, $P\{m(B^3)<m(B^1)\}=0.5013$.

All these arguments support the need to compute (\ref{PA_i}) numerically. 
It is easy to do it with a rigorous error bound: 
For some $\hat x<\min(x_0,\dots,x_{n+1})$, split the integral into the two intervals
$(-\infty, \hat x]$ and $[\hat x,\min\{x_0,\dots,x_{n+1}\}]$. On the first one, the integral 
is bounded by
\begin{align*}
  &\int_{-\infty}^{\hat x} 
  \frac{2}{t_{i+1}-t_{i}}(x_i+x_{i+1}-2y)
   \exp\Big\{\frac{-2(x_i-y)(x_{i+1}-y)}{t_{i+1}-t_i}\Big\}
   \,dy =
  \\ & 
   \exp\Big\{\frac{-2(x_i-\hat x)(x_{i+1}-\hat x)}{t_{i+1}-t_i}\Big\}
   \le \exp\Big\{\frac{-2(x_i\wedge x_{i+1}-\hat x)^2)}{t_{i+1}-t_i}\Big\}
   \ .
\end{align*}
To make this quantity less than a fixed small $\varepsilon$, we can take 
$\hat x < x_i\wedge x_{i+1}-\big(\frac{t_{i+1}-t_i}{2}\log\frac{1}{\varepsilon}\big)^{1/2}$.

For the second interval, denoting the integrand by $f$ and using for instance the standard rectangle 
rule with step size
$h$, the error is bounded by $\frac{1}{2} \|f'\|_\infty\cdot h\cdot L$, where 
$L:=\min(x_0,\dots,x_{n+1})-\hat x $.

Differentiating $f$ and taking into account that all the exponentials take values 
less than 1, one obtains $\|f'\|_\infty \le C$ with   
\begin{equation*}
  C := \frac{4}{t_{i+1}-t_i}
  \Big[1+(x_i+x_{i+1}-2\hat x)\sum_{j=0}^{n} \frac{1}{t_{j+1}-t_j}(x_j+x_{j+1}-2\hat x)\Big]
  \ ,
\end{equation*}
 and the integration step size to ensure an error less than $\varepsilon$ must be
\begin{equation*}
  h \le \frac{2\varepsilon}{C\cdot L}
  \ .
\end{equation*}

A much more efficient method but with a not completely rigorous error bound is given by the 
\verb!quadpack! functions present in the C
Gnu Scientific Library and the Fortran SLATEC Library, which apply a Gauss-Kronrod rule \cite{MR712135}.
With $n=50$, the computation is completed
in less than one-tenth of second, in an Intel i7 CPU at 2.40GHz with 20GB RAM, using 
the  \verb!quadpack! routines implemented in the 
computer algebra system \verb!maxima!, with 
an estimated absolute error rarely bigger than $10^{-9}$. 

The integral of (\ref{PA_i}) can also be transformed into an integral on $[0,1]$ setting 
$y=\min_i x_i-(1-x)/x$ (this is in fact what \verb|quadpack| does), and the new integrand 
does not present any singularity.

\begin{example}\label{ex:approxP1}
  In Table \ref{comparacio_pi}, we show the effective computation of the probability that the minimum
  fall in the first interval, in several situations and with different methods. 
    Sets 1 and 2 comprise four intervals, with end-points at $t=(0,.1,.2,.5,1)$, and values $x=(0,0,0,0,0)$ and $x=(0,.1,.2,.3,.4)$ 
    respectively. Sets 3 and 4 comprise sixteen intervals, with end-points 
    \begin{equation*}
    t=(0,.025,.050,.075,.100,.125,.150,.175,.200,.275,.350,.425,.500,
    .625,.750,.875,1)\ ,
    \end{equation*}
    and all images set to zero in set 3 and to $x=i/40,\ i=0,\dots,16$,
    in set 4.
    
  The methods are:
  1) the analytical computation of the integral (\ref{PA_i}), only in the case of fewest intervals
  (``exact''); 2) the \verb!quadpack! functions through \verb|maxima|; 3) the \verb!romberg! 
  routine built-in in 
  \verb!maxima!; 4) the Riemann approximations with 10\,000 subintervals, taking
  always their left points; 5) the Riemann approximations with the same number of subintervals,
  taking a random point in each one; and 6) the simulation method explained in the next section.   
  In 3),4),5), the computations are also made after the mentioned explicit 
  transformation to the interval [0,1].   
  In 6) a sample of size 10\,000 is taken. For the methods including randomness, 5) and 6), we show
  the highest error observed after 20 realizations.

  All computations were programmed in \verb|maxima|. Time and memory are relative to the fastest 
  and the more economic 
  method in each case; we used the figures reported by \verb|maxima|
  itself in a single run. They give therefore just a rough idea of the computational cost. 
  In the case of 16 intervals, the ``exact'' computation is infeasible and we have taken
  the result of \verb|quadpack| as the base for the figures of the other methods. 
  \begin{table}
  \centering 
{\footnotesize
 \extrarowheight=2pt  
\begin{tabular}{l|lll|lll}
  \hline\hline                                   
  & \multicolumn{3}{l|}{Set 1. Result: 0.05722062072176488 } &\multicolumn{3}{l}{Set 2. Result: 0.3539550244743264 }\\
  \cline{2-7}%\hline
  \phantom{Method}     & error                  & time      & memory    & error                 & time     & memory\\
  \hline
  1) exact             &                        & ~~~~1     & ~~~~1.26  &                       & ~~57.2   & 199\\
  2) quadpack          & $<10^{-16}$            & ~~~~1.07  & ~~~~1     & $<10^{-13}$           & ~~~~1    & ~~~~1 \\
  3) Romberg           & $<10^{-11}$            & ~~~~1.07  & ~~~~1.54  & $<10^{-11}$           & ~~~~1.07 & ~~~~1.63 \\
  4) Riemann left      & $<10^{-16}$            & 122       & 141       & $4.146\times 10^{-6}$ & 121      & 137\\
  5) Riemann random    & $1.008\times 10^{-6}$  & 103       & ~81.8     & $4.283\times 10^{-6}$ & 100      & ~79.0\\
  6) simulation        & $4.179\times 10^{-3}$  & 249       & 191       & $6.045\times 10^{-3}$ & 252      & 188\\
  \hline\hline                                   
  & \multicolumn{3}{l|}{Set 3. Result: 0.003053658531871728 } &\multicolumn{3}{l}{Set 4. Result: 0.3498434691309963 }\\
  \cline{2-7}%\hline
  \phantom{Method}     & error                 & time      & memory    &        error                  & time     & memory\\
  \hline
  2) quadpack          &                       & ~~~~1     & ~~~~1     &                        & ~~~~1    & ~~~~1 \\
  3) Romberg           & $<10^{-12}$           & ~~~~3.41  & ~~~~2.89  & $<10^{-11}$            & ~~~~3.52 & ~~~~2.91 \\
  4) Riemann left      & $<10^{-18}$           & 197       & 294       & $<10^{-7}$             & 199      & 158\\
  5) Riemann random    & $1.366\times 10^{-7}$ & 143       & 157       & $8.140\times 10^{-6}$  & 144      & ~84.3\\
  6) simulation        & $1.446\times 10^{-3}$ & 459       & 462       & $1.06\times 10^{-2}$   & 456      & 254\\  %& ~.2594] &  &  \\
  \hline
\end{tabular}

}
  %\end{center}
  \caption{See Example \ref{ex:approxP1}}   \label{comparacio_pi}                      
  \end{table}
  
\end{example}

\section{Simulating the law of the minimum}\label{sec:simmin}
  We are interested in approximating in an effective way the law of the minimum of the 
  Brownian motion conditioned to the points $(t_0,x_0), \dots, (t_{n+1},x_{n+1})$, with $t_0=0$ and $t_{n+1}=1$,
  so 
  that particular parameters such as its moments can also be easily estimated. To this end,
  taking into account the difficulty and length of the analytical computations implied by
  (\ref{eq-1-FmX}), we resort to simulation. 
    
  A minimum value for each bridge from $(t_i,x_i)$ to $(t_{i+1},x_{i+1})$ can be easily
  simulated from its distribution function $F_{m(B_i)}$, %given in (\ref{eq-Fbridge}),
  which is explicitly invertible:
\begin{equation*}
  F_{m(B_i)}^{-1} (z) = 
{\textstyle\frac{1}{2}}\big(x_i+x_{i+1}-\big((x_{i+1}-x_i)^2-2(t_{i+1}-t_i)\log z\big)^{1/2}\big)
\ ,\quad
  z\in(0,1)
  \ .
\end{equation*}
  Since $m(X)=\min\{m(B^0),\dots,m(B^n)\}$, we can simulate a minimum value of $X$ as 
  the minimum of the simulated minima of each bridge. The computational cost is linear
  in $n$. At the same time, the relative frequency with which each interval contributes 
  to the global minimum constitutes another way to approximate the probabilities 
  $P\{\theta(X)\in [t_i,t_{i+1}]\}$ of Section \ref{sec:approxPi}. This is what is done 
  in row 6 
  of Table \ref{comparacio_pi} for the interval $[0,t_1]$.

  For example, with set 1 of Example \ref{ex:approxP1}, and a sample of size 10\,000, we have obtained the following confidence 
  intervals for the probabilities of each interval to host the minimum:
  \begin{center}{\small
    \extrarowheight=2pt  
    \begin{tabular}{|c|c|}
      \hline
      \hfil interval\hfil  &  95\% C.I.  \\
      \hline
      $[0,0.1]$ & $[0.3501, 0.3715]$ \\ 
      $[0.1,0.2]$ & $[0.0955, 0.1169]$ \\ 
      $[0.2,0.5]$ & $[0.2362, 0.2576]$ \\ 
      $[0.5,1]$ & $[0.2758, 0.2972]$ \\ 
      \hline
    \end{tabular} 
  }
  \end{center}    
  The computations have been done in \verb|R| with the \verb|MultinomialCI| package, based on the
  algorithm of Sison and Glaz \cite{MR1325142}.
  
  Figure  \ref{fig:ajust-imatge} shows the result of simulating the minimum of the process
  in the way described above, 
  conditioned to equispaced points with images equal to zero. The figure includes an histogram and 
  an estimation of the
  density using the polynomial splines algorithm described in \cite{MR1463561}, as implemented in 
  the \verb|logspline| package in \verb|R|.
  %programa per generar la figura: ajust-imatge.R, que produeix el ajust-imatge.tex
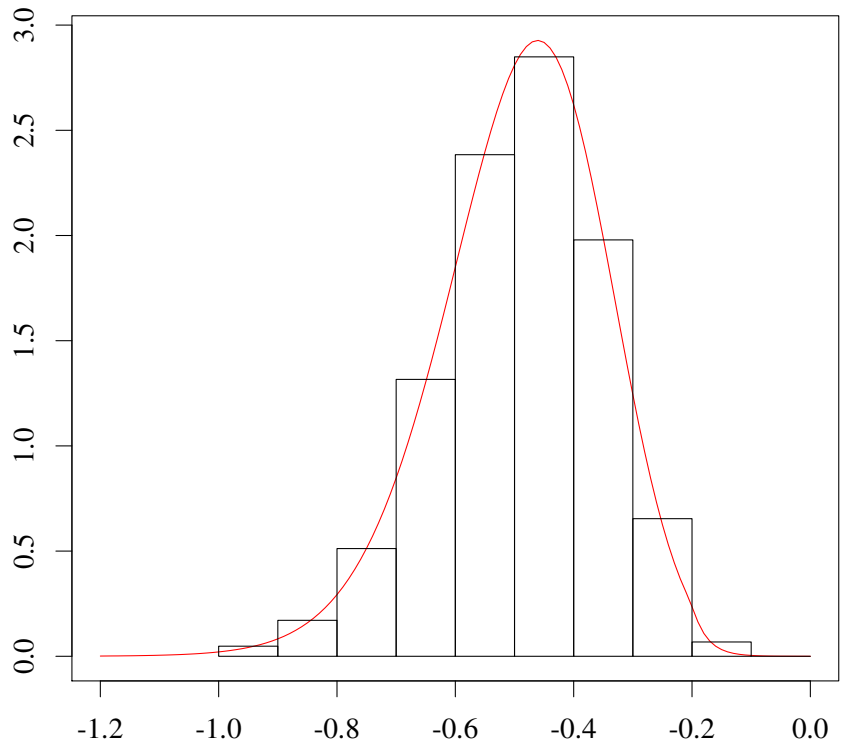
\begin{figure}[th]  
  \begin{center}
    % Created by tikzDevice version 0.8.1 on 2015-05-26 18:09:46
% !TEX encoding = UTF-8 Unicode
\begin{tikzpicture}[x=1pt,y=1pt]
\definecolor{fillColor}{RGB}{255,255,255}
%\path[use as bounding box,fill=fillColor,fill opacity=0.00] (0,0) rectangle (361.35,361.35);
\path[use as bounding box,fill=fillColor,fill opacity=0.00] (50,50) rectangle (311.35,311.35);
\begin{scope}
\path[clip] ( 49.20, 61.20) rectangle (336.15,312.15);
\definecolor{drawColor}{RGB}{255,0,0}

\path[draw=drawColor,line width= 0.4pt,line join=round,line cap=round] ( 59.83, 70.58) --
	( 62.04, 70.60) --
	( 64.26, 70.62) --
	( 66.47, 70.63) --
	( 68.68, 70.66) --
	( 70.90, 70.68) --
	( 73.11, 70.71) --
	( 75.33, 70.75) --
	( 77.54, 70.79) --
	( 79.75, 70.83) --
	( 81.97, 70.89) --
	( 84.18, 70.95) --
	( 86.40, 71.02) --
	( 88.61, 71.10) --
	( 90.83, 71.19) --
	( 93.04, 71.30) --
	( 95.25, 71.42) --
	( 97.47, 71.56) --
	( 99.68, 71.73) --
	(101.90, 71.92) --
	(104.11, 72.14) --
	(106.32, 72.39) --
	(108.54, 72.68) --
	(110.75, 73.01) --
	(112.97, 73.38) --
	(115.18, 73.82) --
	(117.39, 74.31) --
	(119.61, 74.87) --
	(121.82, 75.51) --
	(124.04, 76.24) --
	(126.25, 77.07) --
	(128.47, 78.00) --
	(130.68, 79.06) --
	(132.89, 80.26) --
	(135.11, 81.60) --
	(137.32, 83.11) --
	(139.54, 84.80) --
	(141.75, 86.69) --
	(143.96, 88.80) --
	(146.18, 91.15) --
	(148.39, 93.75) --
	(150.61, 96.63) --
	(152.82, 99.80) --
	(155.03,103.28) --
	(157.25,107.09) --
	(159.46,111.26) --
	(161.68,115.78) --
	(163.89,120.69) --
	(166.11,125.98) --
	(168.32,131.67) --
	(170.53,137.76) --
	(172.75,144.24) --
	(174.96,151.12) --
	(177.18,158.38) --
	(179.39,165.99) --
	(181.60,173.95) --
	(183.82,182.20) --
	(186.03,190.72) --
	(188.25,199.45) --
	(190.46,208.34) --
	(192.67,217.32) --
	(194.89,226.32) --
	(197.10,235.26) --
	(199.32,244.05) --
	(201.53,252.62) --
	(203.75,260.85) --
	(205.96,268.66) --
	(208.17,275.94) --
	(210.39,282.59) --
	(212.60,288.49) --
	(214.82,293.52) --
	(217.03,297.56) --
	(219.24,300.52) --
	(221.46,302.31) --
	(223.67,302.86) --
	(225.89,302.10) --
	(228.10,300.01) --
	(230.32,296.59) --
	(232.53,291.84) --
	(234.74,285.82) --
	(236.96,278.58) --
	(239.17,270.24) --
	(241.39,260.90) --
	(243.60,250.70) --
	(245.81,239.80) --
	(248.03,228.37) --
	(250.24,216.57) --
	(252.46,204.59) --
	(254.67,192.59) --
	(256.88,180.75) --
	(259.10,169.21) --
	(261.31,158.11) --
	(263.53,147.57) --
	(265.74,137.69) --
	(267.96,128.53) --
	(270.17,120.14) --
	(272.38,112.56) --
	(274.60,105.79) --
	(276.81, 99.81) --
	(279.03, 94.59) --
	(281.24, 89.16) --
	(283.45, 83.45) --
	(285.67, 79.12) --
	(287.88, 76.22) --
	(290.10, 74.29) --
	(292.31, 73.02) --
	(294.52, 72.17) --
	(296.74, 71.60) --
	(298.95, 71.23) --
	(301.17, 70.98) --
	(303.38, 70.82) --
	(305.60, 70.71) --
	(307.81, 70.63) --
	(310.02, 70.58) --
	(312.24, 70.55) --
	(314.45, 70.53) --
	(316.67, 70.52) --
	(318.88, 70.51) --
	(321.09, 70.50) --
	(323.31, 70.50) --
	(325.52, 70.49);
\end{scope}
\begin{scope}
\path[clip] (  0.00,  0.00) rectangle (361.35,361.35);
\definecolor{drawColor}{RGB}{0,0,0}

\path[draw=drawColor,line width= 0.4pt,line join=round,line cap=round] ( 59.83, 61.20) -- (325.52, 61.20);

\path[draw=drawColor,line width= 0.4pt,line join=round,line cap=round] ( 59.83, 61.20) -- ( 59.83, 55.20);

\path[draw=drawColor,line width= 0.4pt,line join=round,line cap=round] (104.11, 61.20) -- (104.11, 55.20);

\path[draw=drawColor,line width= 0.4pt,line join=round,line cap=round] (148.39, 61.20) -- (148.39, 55.20);

\path[draw=drawColor,line width= 0.4pt,line join=round,line cap=round] (192.67, 61.20) -- (192.67, 55.20);

\path[draw=drawColor,line width= 0.4pt,line join=round,line cap=round] (236.96, 61.20) -- (236.96, 55.20);

\path[draw=drawColor,line width= 0.4pt,line join=round,line cap=round] (281.24, 61.20) -- (281.24, 55.20);

\path[draw=drawColor,line width= 0.4pt,line join=round,line cap=round] (325.52, 61.20) -- (325.52, 55.20);

\node[text=drawColor,anchor=base,inner sep=0pt, outer sep=0pt, scale=  1.00] at ( 59.83, 39.60) {-1.2};

\node[text=drawColor,anchor=base,inner sep=0pt, outer sep=0pt, scale=  1.00] at (104.11, 39.60) {-1.0};

\node[text=drawColor,anchor=base,inner sep=0pt, outer sep=0pt, scale=  1.00] at (148.39, 39.60) {-0.8};

\node[text=drawColor,anchor=base,inner sep=0pt, outer sep=0pt, scale=  1.00] at (192.67, 39.60) {-0.6};

\node[text=drawColor,anchor=base,inner sep=0pt, outer sep=0pt, scale=  1.00] at (236.96, 39.60) {-0.4};

\node[text=drawColor,anchor=base,inner sep=0pt, outer sep=0pt, scale=  1.00] at (281.24, 39.60) {-0.2};

\node[text=drawColor,anchor=base,inner sep=0pt, outer sep=0pt, scale=  1.00] at (325.52, 39.60) {0.0};

\path[draw=drawColor,line width= 0.4pt,line join=round,line cap=round] ( 49.20, 70.49) -- ( 49.20,308.64);

\path[draw=drawColor,line width= 0.4pt,line join=round,line cap=round] ( 49.20, 70.49) -- ( 43.20, 70.49);

\path[draw=drawColor,line width= 0.4pt,line join=round,line cap=round] ( 49.20,110.18) -- ( 43.20,110.18);

\path[draw=drawColor,line width= 0.4pt,line join=round,line cap=round] ( 49.20,149.87) -- ( 43.20,149.87);

\path[draw=drawColor,line width= 0.4pt,line join=round,line cap=round] ( 49.20,189.56) -- ( 43.20,189.56);

\path[draw=drawColor,line width= 0.4pt,line join=round,line cap=round] ( 49.20,229.25) -- ( 43.20,229.25);

\path[draw=drawColor,line width= 0.4pt,line join=round,line cap=round] ( 49.20,268.94) -- ( 43.20,268.94);

\path[draw=drawColor,line width= 0.4pt,line join=round,line cap=round] ( 49.20,308.64) -- ( 43.20,308.64);

\node[text=drawColor,rotate= 90.00,anchor=base,inner sep=0pt, outer sep=0pt, scale=  1.00] at ( 34.80, 70.49) {0.0};

\node[text=drawColor,rotate= 90.00,anchor=base,inner sep=0pt, outer sep=0pt, scale=  1.00] at ( 34.80,110.18) {0.5};

\node[text=drawColor,rotate= 90.00,anchor=base,inner sep=0pt, outer sep=0pt, scale=  1.00] at ( 34.80,149.87) {1.0};

\node[text=drawColor,rotate= 90.00,anchor=base,inner sep=0pt, outer sep=0pt, scale=  1.00] at ( 34.80,189.56) {1.5};

\node[text=drawColor,rotate= 90.00,anchor=base,inner sep=0pt, outer sep=0pt, scale=  1.00] at ( 34.80,229.25) {2.0};

\node[text=drawColor,rotate= 90.00,anchor=base,inner sep=0pt, outer sep=0pt, scale=  1.00] at ( 34.80,268.94) {2.5};

\node[text=drawColor,rotate= 90.00,anchor=base,inner sep=0pt, outer sep=0pt, scale=  1.00] at ( 34.80,308.64) {3.0};

\path[draw=drawColor,line width= 0.4pt,line join=round,line cap=round] ( 49.20, 61.20) --
	(336.15, 61.20) --
	(336.15,312.15) --
	( 49.20,312.15) --
	( 49.20, 61.20);
\end{scope}
\begin{scope}
\path[clip] ( 49.20, 61.20) rectangle (336.15,312.15);
\definecolor{drawColor}{RGB}{0,0,0}

\path[draw=drawColor,line width= 0.4pt,line join=round,line cap=round] (104.11, 70.49) rectangle (126.25, 74.30);

\path[draw=drawColor,line width= 0.4pt,line join=round,line cap=round] (126.25, 70.49) rectangle (148.39, 84.06);

\path[draw=drawColor,line width= 0.4pt,line join=round,line cap=round] (148.39, 70.49) rectangle (170.53,111.13);

\path[draw=drawColor,line width= 0.4pt,line join=round,line cap=round] (170.53, 70.49) rectangle (192.67,174.96);

\path[draw=drawColor,line width= 0.4pt,line join=round,line cap=round] (192.67, 70.49) rectangle (214.82,259.74);

\path[draw=drawColor,line width= 0.4pt,line join=round,line cap=round] (214.82, 70.49) rectangle (236.96,296.65);

\path[draw=drawColor,line width= 0.4pt,line join=round,line cap=round] (236.96, 70.49) rectangle (259.10,227.59);

\path[draw=drawColor,line width= 0.4pt,line join=round,line cap=round] (259.10, 70.49) rectangle (281.24,122.41);

\path[draw=drawColor,line width= 0.4pt,line join=round,line cap=round] (281.24, 70.49) rectangle (303.38, 75.89);

\path[draw=drawColor,line width= 0.4pt,line join=round,line cap=round] (303.38, 70.49) rectangle (325.52, 70.49);
\end{scope}
\end{tikzpicture}
  \end{center}  
  \caption{Histogram and density estimation of the minimum of a Brownian motion conditioned 
    to pass through the points $\{(k/4,0),\ k=0,\dots,4\}$, with a sample size of 10000
    observations. An asymptotic 95\% symmetric confidence interval for the mean yielded
    $[-0.4939, -0.4884]$. The sample median was -0.4814 .} \label{fig:ajust-imatge} 
\end{figure}

\section{Non-adaptive optimisation}\label{sec:naopt}

  Suppose now that we have a black-box optimisation problem in which we can assume that the Brownian bridge is
  a good probabilistic model for the function at hand. That means, suppose that we are trying to find 
  a point 
  in the interval $[0,1]$ with an image as close as possible to the true minimum of a given but 
  unknown path, 
  drawn at random from the law of a Brownian bridge. The value of the bridge at the end-points is
  assumed to be given, or that they have already been sampled. It is also assumed that we are allowed 
  to sample the path at a fixed small quantity $n$ of points in $[0,1]$.

  A non-adaptive algorithm for this optimisation problem consists in deciding beforehand the $n$ points 
  where we are
  going to sample the path. Any such algorithm has the same convergence order as the best adaptive algorithm as 
  $n\to\infty$, namely $O(n^{-1/2})$, are much simpler to implement,
  and offer parallelisation opportunities.
  Therefore it is worth comparing non-adaptive algorithms in terms of the size of the 
  error incurred for small $n$.    
  In a forthcoming paper we will 
  discuss and compare some adaptive heuristics.
 
\subsection{Two simple strategies}\label{sec:simpEst}
  
  We will first consider and compare two simple and known strategies, 
  whose asymptotic behaviour have already
  been compared (see \cite{MR1325821}), namely: 
  Sampling at equidistant points $\frac{k}{n+1}$, $k=1,\dots,n$, and
  sampling at random uniformly distributed points. 
  We apply both to a bridge with values
  0 and 1 at the end-points and to a symmetric bridge (same value at the end-points). We obtain 
  approximate 95\% confidence intervals for the difference between the minimal sampled value 
  and the true minimum of the path; the results are summarised in Table \ref{comparacio_eqd_rnd}.
  Formally, the confidence intervals estimate the expectation
  \begin{equation*}
  \E\big[\min_{0\le i\le n+1} B_{t_i}-\min_{t\in[0,1]} B_{t}\big]
  \ ,
  \end{equation*}  
  where $B$ is the initial bridge joining $(0,x_0)$ and $(1,x_{n+1})$. In one case the points $t_i$ 
  are fixed; in the other, they are themselves random.
  
  The procedure for the computations is as follows: 
\begin{enumerate}
  \item
  Fix the number of points to sample. We have used $n=2,4,8,16,32,64$ to see the evolution of
  the intervals when the number of points increases. 
  \item\label{it:eqd}
  Sample a path of the Brownian bridge at point $t_1=1/(n+1)$; this is done by simulating a value
  of $B_{t_1}$, which is easy because its law is Gaussian and known. Then, sample at point $t_2=2/(n+1)$ the bridge from $(t_1,x_1)$ to 
  $(1,x_{n+1})$. Proceed similarly to get the values of the path at all equidistant points. 
  \item\label{it:rnd}
  For the simulation at the $n$ random points in $[0,1]$, determine first at which subinterval
  of all previously sampled points the new one belongs to, and sample from the corresponding 
  bridge. The equidistant
  points of step \ref{it:eqd} and their evaluations are included here so that both
   methods are in fact 
  applied to the same path. 
  \item
  From all the $2n$ sampled points of steps \ref{it:eqd} and \ref{it:rnd}, estimate the expectation
  of the minimum of the path to which they belong, with the method described in Section \ref{sec:simmin}. 
  We have used a simulation of size 1000 in this case, 
  taking the mean of the
  values obtained.
  \item\label{it:diff}
  For each sampling strategy, compute the difference between the best sampled point and 
  the estimated minimum of the path.  
  \item
  Repeat steps \ref{it:eqd}--\ref{it:diff} a number of times (we used 1000), and construct
  the asymptotic confidence intervals from the sets of differences obtained, for both strategies.
\end{enumerate}  

  %The results are summarised in Table \ref{comparacio_eqd_rnd} for the two bridges. 
We observe in Table \ref{comparacio_eqd_rnd} that equidistant sampling (`eqd') performs better
than random sampling (`rnd') for both bridges. One also observes that the errors are smaller for the non-symmetric bridge; 
this can be explained by a smaller variance of its minimum value
(these variances can be computed analytically from the density (\ref{eq-fbridge})), despite the fact that
many evaluations are possibly wasted in a non-promising region.  
\begin{table} {\small
  \centering
  \extrarowheight=2pt
  \begin{tabular}{c|c|c|c|c|}
    \cline{2-5}
    & \multicolumn{2}{c|}{Bridge from $(0,0)$ to $(1,1)$} & \multicolumn{2}{c|}{Bridge from $(0,0)$ to $(1,0)$} \\
    \hline
    \multicolumn{1}{|c|}{\# points} & 95\% C.I. eqd & 95\% C.I. rnd & 95\% C.I. eqd & 95\% C.I. rnd \\
    \hline
    \multicolumn{1}{|c|}{$~2$}  & $[0.2390, 0.2517]$ & $[0.2547, 0.2729]$ & $[0.3417, 0.3549]$ & $[0.3791, 0.4012]$ \\
    \multicolumn{1}{|c|}{$~4$}  & $[0.2025, 0.2132]$ & $[0.2163, 0.2320]$ & $[0.2552, 0.2649]$ & $[0.3002, 0.3194]$\\ 
    \multicolumn{1}{|c|}{$~8$}  & $[0.1659, 0.1745]$ & $[0.1759, 0.1891]$ & $[0.1944, 0.2023]$ & $[0.2183, 0.2317]$\\                                                        
    \multicolumn{1}{|c|}{$16$}  & $[0.1280, 0.1341]$ & $[0.1376, 0.1475]$ & $[0.1390, 0.1447]$ & $[0.1651, 0.1760]$\\
    \multicolumn{1}{|c|}{$32$}  & $[0.0920, 0.0963]$ & $[0.1040, 0.1111]$ & $[0.0987, 0.1028]$ & $[0.1198, 0.1283]$\\ 
    \multicolumn{1}{|c|}{$64$}  & $[0.0663, 0.0694]$ & $[0.0778, 0.0838]$ & $[0.0712, 0.0741]$ & $[0.0851, 0.0910]$\\ 
    \hline
  \end{tabular} 
  \caption{Approximate confidence intervals for the expectation of the error when estimating 
    the minimum of a Brownian bridge by the minimum of the sampled values, for equidistant
    (`eqd') and random (`rnd') sampling. The first column is the sample size.} \label{comparacio_eqd_rnd}   
}                     
\end{table}

 Calvin \cite{MR1325821} showed that when $n\to\infty$, the quotient between the errors with 
 equidistant and with random sampling approaches $\approx 0.8239$. Table \ref{quotients_eqd_rnd}
 contains the quotients obtained
 in the simulations. The quotient tends to be bigger when points are few, and it has 
 a decreasing tendency towards the limiting value as the 
 number of points increases.
\begin{table} {\small
  \centering
   \extrarowheight=2pt
  \begin{tabular}{c|c|c|}
    \cline{2-3}
    & {Bridge from $(0,0)$ to $(1,1)$} & {Bridge from $(0,0)$ to $(1,0)$} \\
    \hline
    \multicolumn{1}{|c|}{\# points} & error(eqd) / error(rnd) & error(eqd) / error(rnd) \\
    \hline
    \multicolumn{1}{|c|}{$~2$}  & $0.9301$ & $0.8927$ \\
    \multicolumn{1}{|c|}{$~4$}  & $0.9273$ & $0.8394$ \\ 
    \multicolumn{1}{|c|}{$~8$}  & $0.9326$ & $0.8816$ \\                                                        
    \multicolumn{1}{|c|}{$16$}  & $0.9193$ & $0.8317$ \\
    \multicolumn{1}{|c|}{$32$}  & $0.8754$ & $0.8122$ \\ 
    \multicolumn{1}{|c|}{$64$}  & $0.8397$ & $0.8251$ \\ 
    \hline
  \end{tabular} 
  \caption{Quotient between the errors with equidistant (`eqd') and random (`rnd') sampling.} 
    \label{quotients_eqd_rnd}  
}                        
\end{table}  
 
\subsection{A new algorithm}\label{sec:newAlg}
We propose a new non-adaptive strategy that performs better than equidistant sampling, 
according to the experiments, and that we will call \emph{equiprobable sampling}. 
We sample at the points that divide $[0,1]$ into intervals that
have the same probability to contain the minimum. That means, with the notation of 
Section \ref{sec:Pi}, at points
 $0=t_0<t_1<\cdots<t_n<t_{n+1}=1$ such that
\begin{equation*}
P\{\theta(B)\in [t_i,t_{i+1}]\}=1/(n+1)
\ ,\quad
\text{for all $i=0,\dots,n$\ .}
\end{equation*}
  For a symmetric bridge, these points are those of equidistant sampling. In general, 
  they can be found numerically without much difficulty with the general formula (\ref{eq:fargmin}) 
for the density of the location of the minimum $\theta(B)$ that we are going to prove in Section
\ref{sec:SimLoc}, and that
in this case reduces to
\begin{equation*}
f_{\theta(B)}(s)=
\sqrt{\frac{2(1-s)}{\pi s}} \exp\Big\{\frac{-s}{2(1-s)}\Big\}
\ind_{\{0\le s\le 1\}} 
\ .
\end{equation*} 
Indeed, it is enough to apply a simple bisection method to the distribution function, 
which is increasing, to obtain the unique $t_k$ such that
\begin{equation*}
  \int_{0}^{t_k} f_{\theta(B)}(s) ds = \frac{k}{n+1}\ .
\end{equation*}
Although the integrand is singular at $s=0$, the \verb|quadpack| library \cite{MR712135} 
can handle it
with more than enough precision for our purposes.

\begin{table} {\small
  \centering
  %\begin{center}
  \extrarowheight=2pt
  \begin{tabular}{c|c|c|c|}
    \cline{2-4}
    & \multicolumn{3}{c|} {Bridge from $(0,0)$ to $(1,1)$} \\
    \hline
    \multicolumn{1}{|c|}{\# points} & 95\% C.I. eqp & 95\% C.I. rnd & error(eqp) / error(rnd)\\
    \hline
    \multicolumn{1}{|c|}{$~2$}  & $[0.1975, 0.2144]$ & $[0.2547, 0.2729]$ & 0.7807\\
    \multicolumn{1}{|c|}{$~4$}  & $[0.1637, 0.1780]$ & $[0.2163, 0.2320]$ & 0.7622\\ 
    \multicolumn{1}{|c|}{$~8$}  & $[0.1275, 0.1387]$ & $[0.1759, 0.1891]$ & 0.7293\\                                                        
    \multicolumn{1}{|c|}{$16$}  & $[0.0909, 0.0991]$ & $[0.1376, 0.1475]$ & 0.6664\\
    \multicolumn{1}{|c|}{$32$}  & $[0.0677, 0.0741]$ & $[0.1040, 0.1111]$ & 0.6592\\ 
    \multicolumn{1}{|c|}{$64$}  & $[0.0491, 0.0538]$ & $[0.0778, 0.0838]$ & 0.6368\\ 
    \hline
  \end{tabular} 
  %\end{center}    
  \caption{Equiprobable sampling (`eqp') compared to random sampling (`rnd')
    for a bridge from $(0,0)$ to $(1,1)$. The first column is the sample size and the 
    last one is the quotient of the expected errors.}\label{comparacio_eqp}   
}                      
\end{table}  
With 1000 runs as in the other two methods, we obtained the
results of Table \ref{comparacio_eqp}.
Comparing with Table \ref{comparacio_eqd_rnd}, we see that the expectation
of the error for the equiprobable (`eqp') strategy is the lowest of the three; however, the confidence intervals
for the error are longer than with `eqd', because the variance turns out to be larger. The variances
for `rnd' are the largest. 

Concerning the ratio of expected errors, an asymptotic analysis similar to that of Calvin \cite{MR1325821} is needed
to possibly determine their limit value.

\subsection{Sensitivity analysis}
A natural question that arises here is to what extent the better performance of the equiprobable sampling 
algorithm is tied to the particular law of the Brownian bridge. In other words: what happens if we
sample at points $t_1,\dots,t_n$ computed as in Section \ref{sec:newAlg}, 
according to the Brownian bridge law, when the underlying
process has a different probability law? 
To test this sensitivity issue, we have considered Ornstein--Uhlenbeck bridges with several 
different parameter values.
We recall
first the definition of the Ornstein--Uhlenbeck process and the notion of stochastic bridge in general.

The Ornstein--Uhlenbeck velocity process (O-U, for short), starting 
with value $x_i\in\Reals$ at time $t_i$ is
a Gaussian stochastic process with mean and covariance functions given by
\begin{align*}
  \mu(t) &= x_i\cdot \exp\{-\beta(t-t_i)\}\ ,\quad t\ge t_i\ ,
  \\
  R(t,s) &= \frac{\sigma^2}{2\beta} \big( \exp\{-\beta|t-s|\}-\exp\{-\beta(t+s-2t_i)\}\big)\ ,\quad t,s\ge t_i\ ,
\end{align*} 
  where $\beta>0$ and $\sigma^2>0$. One gets the standard Brownian motion when $\sigma^2=1$ and $\beta\to 0$.

A \emph{bridge} is derived from a given stochastic process $X:=\{X_t,\ t\ge t_i\}$ by conditioning
to a final random variable $X_{t_{i+1}}$. Specifically, for our purposes, an \emph{Ornstein--Uhlenbeck bridge} $B$ from
$(t_i,x_i)$ to $(t_{i+1},x_{i+1})$ is an Ornstein--Uhlenbeck process, starting at $X_{t_i}=x_i$ and conditioned 
to  
$X_{t_{i+1}}=x_{i+1}$. More formally, the law of $B$ coincides with the law of $X$ conditioned to 
the event $\{X_{t_{i+1}}=x_{i+1}\}$.

If $X$ is a Gaussian process, with mean and covariance functions $\mu$ and $R$, it can be shown that
the corresponding bridge $B$ is also Gaussian, with mean and covariance given by
\begin{align*}
\bar\mu(t) &= \mu(t)+\big(x_{i+1}-\mu(t_{i+1})\big)\cdot\frac{R(t,t_{i+1})}{R(t_{i+1},t_{i+1})}\ ,
\quad 
t_i\le t\le t_{i+1}\ ,
\\
\bar R(t,s) &= R(t,s) - \frac{R(t,t_{i+1})R(s,t_{i+1})}{R(t_{i+1},t_{i+1})} \ ,
\quad 
t_i \le t,s\le t_{i+1}\ .
\end{align*} 
See Gasbarra et al.~\cite{MR2397795} for a survey on Gaussian bridges.

We have tested (Brownian bridge)-equiprobable sampling (we will abbreviate it as `Bb-eqp' in the sequel) 
against pure equidistant sampling (`eqd'),
on O-U bridges from $(0,0)$ to $(1,1)$.
The main difficulty here is that the distribution of the minimum of an O-U bridge
is not known. That means we cannot simulate exactly the minimum of each subinterval as 
we did in the case of the Brownian bridge. 

Instead, we approximate the global minimum of 
the path by sampling at $2^{20}$ equidistant points. 
Hence, strictly speaking, what we are doing is a comparison between sampling the O-U path at a few points (up to $2^6$), with
`eqd' an with `Bb-eqp' sampling, versus sampling it at equidistant $2^{20}$ points.
The latter can be done efficiently by 
means of a recursive dyadic partition of $[0,1]$, taking into account that conditioning an O-U bridge
at an interior point results in two independent O-U bridges connected at that point. Therefore, the values at points
of the form $k\cdot 2^{-n}$, $k=1,3,\dots,2^n-1$ can be generated after computing the values at all points of the form 
$k\cdot 2^{-(n-1)}$, and using only these values. 

The resulting computational cost is anyway much bigger than in the simulations
involving only Brownian bridges. For efficiency, in this case they have been coded in \verb|C++| instead of \verb|maxima|, 
using the random number generator of the GNU Scientific Library. 
As before, we have produced 1000 paths for each of the sample sizes $n=2,4,8,16,32,64$. 

For each run, the global minimum of the discretized path is recorded. 
Then, the values at the `eqd' $n$ points and at the `Bb-eqp' 
$n$ points are simulated, taking into account the small bridge of length $2^{-20}$ 
to which they belong. 
Finally, we construct as before 
asymptotic 95\% confidence intervals for the differences between the global minimum of the (discretized)
path and the best sampled point for each method.

The results are summarised in Table \ref{ouEqdEqp}, for the same non-symmetric bridge of Section
\ref{sec:simpEst}. We have considered three different pairs of values for the 
parameters of the O-U bridge:   
\begin{description}
  \item [Case 1]
With $\beta=0.01$, $\sigma^2=1$, 
we have a process with a covariance very similar to that of the Brownian bridge.  
  \item [Case 2]
With  $\beta=4$, $\sigma^2=1$, the variances 
of the O-U bridge variables $B_t$ are always lower than those of the 
Brownian bridge. At the midpoint of the bridge, where the variance is always maximal, 
the O-U bridge variance is approximately half of that of the Brownian bridge. 
  \item [Case 3]
With $\beta=0.01$, $\sigma^2=2$, the variances are always greater than those of the Brownian bridge,
and approximately double at
the midpoint. 
\end{description}

The first thing we notice is that `Bb-eqp' sampling continues to perform better than `eqd' in all cases,
and the results are closer when the variances of the process are small (case 2). 
The confidence intervals are slightly wider for `Bb-eqp', as they were with the Brownian bridge.
 
In case 1, the results are very close to the corresponding ones of 
Tables \ref{comparacio_eqd_rnd} and \ref{comparacio_eqp} (first columns), with slightly 
wider confidence intervals, in both methods. This is to be expected, since the O-U and Brownian bridges are
very similar stochastic processes in this case.
In cases 2 and 3, 
where variances are
sensibly different from Brownian bridge,
we see that the errors are also notably bigger.
Higher variances lead to higher errors (case 3), which is not surprising either.
We can conclude that `Bb-eqp' seems to be better than `eqd' for small samples even when the 
underlying model deviates from the Brownian bridge, at least when it deviates towards the O-U model.

{\renewcommand\tablename{Tables}   
  \begin{table}[ht] {\small
      \centering
      \begin{subtable}{\textwidth}
        \centering
        \extrarowheight=2pt
        \begin{tabular}{c|c|c|c|}
          \cline{2-4}
          & \multicolumn{3}{c|}{95\% C.I. eqd} \\
          \hline
          \multicolumn{1}{|c|}{\# points}      
          & \multicolumn{1}{c|}{$\beta=0.01$, $\sigma^2=1$} 
          & \multicolumn{1}{c|}{$\beta=4$, $\sigma^2=1$} 
          & \multicolumn{1}{c|}{$\beta=0.01$, $\sigma^2=2$}\\
          \hline
          \multicolumn{1}{|c|}{$~2$} & $[0.2454, 0.2675]$ & $[0.2976, 0.3183]$ & $[0.3739, 0.4046]$ \\
          \multicolumn{1}{|c|}{$~4$} & $[0.2039, 0.2215]$ & $[0.2424, 0.2584]$ & $[0.3103, 0.3351]$ \\ 
          \multicolumn{1}{|c|}{$~8$} & $[0.1557, 0.1682]$ & $[0.1834, 0.1962]$ & $[0.2359, 0.2535]$ \\                                                        
          \multicolumn{1}{|c|}{$16$} & $[0.1219, 0.1308]$ & $[0.1324, 0.1417]$ & $[0.1712, 0.1839]$ \\
          \multicolumn{1}{|c|}{$32$} & $[0.0895, 0.0961]$ & $[0.0966, 0.1031]$ & $[0.1331, 0.1423]$ \\ 
          \multicolumn{1}{|c|}{$64$} & $[0.0673, 0.0719]$ & $[0.0709, 0.0756]$ & $[0.0959, 0.1027]$ \\ 
          \hline
        \end{tabular} 
        \caption{Equidistant sampling}
      \end{subtable}      
      \\[2mm]
      \centering
      \begin{subtable}{\textwidth}
        \centering
        \extrarowheight=2pt
        \begin{tabular}{c|c|c|c|}
          \cline{2-4}
          & \multicolumn{3}{c|}{95\% C.I. Bb-eqp} \\
          \hline
          \multicolumn{1}{|c|}{\# points}      
          & \multicolumn{1}{c|}{$\beta=0.01$, $\sigma^2=1$} 
          & \multicolumn{1}{c|}{$\beta=4$, $\sigma^2=1$} 
          & \multicolumn{1}{c|}{$\beta=0.01$, $\sigma^2=2$}\\
          \hline
          \multicolumn{1}{|c|}{$~2$} & $[0.1972, 0.2179]$  & $[0.2858, 0.3087]$ & $[0.3369, 0.3707]$ \\
          \multicolumn{1}{|c|}{$~4$} & $[0.1603, 0.1770]$  & $[0.2235, 0.2428]$ & $[0.2715, 0.2990]$ \\ 
          \multicolumn{1}{|c|}{$~8$} & $[0.1211, 0.1338]$  & $[0.1723, 0.1868]$ & $[0.1980, 0.2184]$ \\                                                        
          \multicolumn{1}{|c|}{$16$} & $[0.0930, 0.1023]$  & $[0.1268, 0.1380]$ & $[0.1562, 0.1733]$ \\
          \multicolumn{1}{|c|}{$32$} & $[0.0681, 0.0749]$  & $[0.0909, 0.0991]$ & $[0.1161, 0.1285]$ \\ 
          \multicolumn{1}{|c|}{$64$} & $[0.0497, 0.0548]$  & $[0.0700, 0.0770]$ & $[0.0855, 0.0950]$ \\ 
          \hline
        \end{tabular} 
        \caption{Equiprobable sampling based on the sampling points of a Brownian bridge}
      \end{subtable}
      \caption{Approximate confidence intervals for the expectation of the error when applying equidistant sampling
        (a) and equiprobable sampling based on the law of a Brownian bridge (b) for the minimum of a 
        Ornstein--Uhlenbeck bridge from $(0,0)$ to $(1,1)$.}\label{ouEqdEqp}                       
    }
  \end{table}    
}

The use of the Ornstein--Uhlenbeck process for testing sensitivity is a natural choice here
since it is the solution of the stochastic differential equation 
\begin{equation*}
\begin{cases}
  dX_t = - \beta X_t +\sigma dW_t\ ,\quad t\ge t_i \\
  X_{t_i}=x_i\ ,
\end{cases}  
\end{equation*}
  where $W$ is a Brownian motion, which models the velocity of a particle under diffusive forces
  with noise intensity $\sigma$, and friction coefficient $\beta$. No friction and unit intensity
  results in a standard Brownian motion.
  
Another natural option for a sensitivity analysis would be the fractional Brownian motion (fBm), that can
be defined as a centred Gaussian process with covariance 
\begin{equation*}
R(t,s)=\frac{1}{2} \big(|t|^{2H}+|s|^{2H}-|t-s|^{2H}\big) \ ,
\end{equation*}
where $0<H<1$. When $H=1/2$ we get again the standard Brownian motion, whereas $H>1/2$ produces  
more regular paths and
positive long-range dependence, and $H<1/2$ leads to more irregular paths and negative
long-range dependence. One can therefore consider
several (or random) values of $H\in[0,1]$ to study the sensitivity of the algorithms.

We have not used fBm as a perturbed model because, besides the fact that the law of the minimum 
of the fB bridge is not known, 
one faces the additional difficulty that conditioning to an interior point does not produce 
two independent bridges, due to the absence of the Markov property. 
Approximating an entire (discretized) path of a fB bridge thus involves a higher cost in time 
and computing memory (see, e.g. Asmussen and Glynn \cite[chapter XI]{MR2331321}). The same
happens with another natural alternative, the integrated Brownian motion, with covariance
\begin{equation*}
R(t,s)=\int_0^t \int_0^s (u\wedge v)\,dv\,du \ ,
\end{equation*}
and whose paths are of class $C^1$ with probability one.   

\section{Simulating the law of the location of the minimum}\label{sec:SimLoc}
The location of the minimum of a continuous function is an ill-posed problem: small
changes in the function may result in big changes in the location of the minimum.
Therefore, the information  about the location of
the minimum given by the sampled values is limited, and possibly of less
practical importance than the information about the minimum value.
Anyway, we can try to visualise this information through the law of 
$\theta(X):=\arg\min_{[0,1]} X_t$.

This law can be simulated with 
the auxiliary use of 
the minima of all bridges $B^0,\dots,B^n$, which in turn can be easily simulated as we have seen 
in Section \ref{sec:simmin}. We prove first that conditioned to all these minima,  
the variables $\theta(X)$ and $\theta(B^j)$,  
where $j$ is the index of the 
interval where the global minimum is attained, have the same law. 
This is the contents of the next proposition:

\begin{prop}\label{le:thx.thbj}
  Denote $x^*:=\min_{0\le i\le n+1} x_i$ and 
  $\Psi_j:=\{y=(y_0,\dots,y_n)\in [-\infty,x^*]^{n+1}\, :\ y_j=\min_i y_i\}$.
  For any Borel set $A\subset[0,1]$, 
\begin{equation*}
  P\big\{\condprob{\theta(X)\in A}{ m(B^0)=y_0,\dots, m(B^n)=y_n}\big\}=
  P\big\{\condprob{\theta(B^j)\in A}{m(B^j)=y_j}\big\}
  \ 
\end{equation*}
  on $\Psi_j$ almost everywhere with respect to Lebesgue measure.
\end{prop}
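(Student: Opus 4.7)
The plan is to split the argument into two independent observations, and then combine them. First I would establish the pathwise identity that on the event $\Psi_j$ the global minimum of $X$ is achieved in the $j$-th subinterval. Indeed, on $[t_i,t_{i+1}]$ the process $X$ coincides with $B^i$, so $m(X)=\min_{0\le i\le n}m(B^i)$; when $(m(B^0),\dots,m(B^n))\in\Psi_j$ this minimum equals $m(B^j)$, achieved (uniquely, with probability one) at $\theta(B^j)\in[t_j,t_{j+1}]$. Therefore $\theta(X)=\theta(B^j)$ pointwise on $\Psi_j$, and consequently, for any Borel $A\subset[0,1]$,
\begin{equation*}
  P\bigl\{\condprob{\theta(X)\in A}{m(B^0)=y_0,\dots,m(B^n)=y_n}\bigr\}
  =
  P\bigl\{\condprob{\theta(B^j)\in A}{m(B^0)=y_0,\dots,m(B^n)=y_n}\bigr\}
\end{equation*}
for $(y_0,\dots,y_n)\in\Psi_j$ Lebesgue-almost everywhere (the exceptional set being where two local minima coincide, which has Lebesgue measure zero).

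Second, I would use the mutual independence of the bridges $B^0,\dots,B^n$, already invoked in the paper via the strong Markov property of Brownian motion. Since $B^j$ is independent of the family $\{B^i\}_{i\ne j}$, the random pair $(\theta(B^j),m(B^j))$ is independent of the vector $(m(B^i))_{i\ne j}$. A standard property of regular conditional distributions then gives
\begin{equation*}
  P\bigl\{\condprob{\theta(B^j)\in A}{m(B^0)=y_0,\dots,m(B^n)=y_n}\bigr\}
  =
  P\bigl\{\condprob{\theta(B^j)\in A}{m(B^j)=y_j}\bigr\}
\end{equation*}
for almost every $(y_0,\dots,y_n)$ with respect to the joint law of $(m(B^0),\dots,m(B^n))$; since this joint law has a density on $(-\infty,x^*]^{n+1}$ by formula (\ref{eq-fbridge}), ``almost every'' here coincides with ``Lebesgue-almost every''. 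Chaining these two equalities yields the claim.

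The only real subtlety is the bookkeeping of null sets: the pathwise identity $\theta(X)=\theta(B^j)$ must be asserted only on $\Psi_j$, and the passage from conditioning on the whole vector to conditioning on a single component must be justified via independence rather than by a naive cancellation of densities. Once these points are handled, the proof is essentially a one-line consequence of independence plus the trivial observation that the global minimum lives in the subinterval where the smallest local minimum is attained.
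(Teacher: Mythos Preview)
Your proof is correct and follows essentially the same two-step structure as the paper: first the pathwise identification $\theta(X)=\theta(B^j)$ on $\Psi_j$ via uniqueness of the Brownian bridge minimiser, then the reduction of the conditioning by independence of the bridges. If anything, your treatment of the null-set bookkeeping and the passage from ``almost every with respect to the joint law'' to ``Lebesgue-almost every'' is slightly more explicit than the paper's.
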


\begin{proof}
First, we prove the equality
\begin{equation*}
  P\big\{\condprob{\theta(X)\in A}{m(B^0)=y_0,\dots, m(B^n)=y_n}\big\}=
  P\big\{\condprob{\theta(B^j)\in A}{m(B^0)=y_0,\dots, m(B^n)=y_n}\big\}
  \ .
\end{equation*}
  on $\Psi_j$ almost everywhere with respect to Lebesgue measure ($y$-a.e. on $\Psi_j$ for short).

It is clear that the support of $\theta(X)$ and of $\theta(B^j)$ with respect to the conditional law
is the interval $I_j:=[t_j,t_{j+1}]$, $y$-a.e. on $\Psi_j$; therefore, we can 
assume $A\subset I_j$.

Denoting $s^*:=\theta(X)$, since $X_s=B^j_s$, $\forall s\in I_j$, and $s^*\in I_j$ almost surely with respect
to the conditional law, $y$-a.e. on $\Psi_j$, we have 
$m(X)=X_{s^*} = B^j_{s^*} = m(B^j)$ .
This implies $\theta(B^j)=s^*$ almost surely, $y$-a.e. on $\Psi_j$, due to the almost sure uniqueness
of the location of the minimum of a Brownian bridge.

Finally, the equality
\begin{equation*}
P\big\{\condprob{\theta(B^j)\in A}{m(B^1)=y_1,\dots, m(B^n)=y_n}\big\} =
P\big\{\condprob{\theta(B^j)\in A}{m(B^j)=y_j}\big\}
\ .
\end{equation*}
comes from the independence of $B^j$ from all the other bridges. 
\end{proof}

From Proposition \ref{le:thx.thbj}, we see that to simulate the location of the minimum of $X$ it is enough
to simulate the minima of all bridges, select the lowest of them $y_j$, and then simulate the 
location $\theta(B^j)$ of the minimum 
of the bridge $B^j$ conditioned only to $m(B^j)=y_j$. We need first the law of the vector
$(m(B^j),\theta(B^j))$. This is stated in the next proposition. We also give the  
marginal law of $\theta(B^j)$, since we have not been able to find it in the literature in this
generality, even though it will not be used directly in the simulation. 

\begin{prop}
 Writing $\ell:=t_{j+1}-t_j$ and  $d:=|x_{j+1}-x_j|$, the minimum of the 
 bridge $B^j$ from $(t_j,x_j)$ to $(t_{j+1},x_{j+1})$ and its location have the joint density
 \begin{equation}\label{eq:fminargmin}
 f_{(m(B^j),\theta(B^j))}(y,s)=
 \frac{(x_j-y)(x_{j+1}-y)\sqrt{2\ell}}{\sqrt{\pi (s-t_j)^3 (t_{j+1}-s)^3}} 
 \exp\Big\{\frac{d^2}{2\ell}-\frac{(x_j-y)^2}{2(s-t_j)}-\frac{(x_{j+1}-y)^2}{2(t_{j+1}-s)}\Big\}
 \ind_{\{y<x_j,\, y<x_{j+1},\, t_j\le s\le t_{j+1}\}} 
 \ ,
 \end{equation} 
 and the density of the location is  
 \begin{equation}\label{eq:fargmin}
 f_{\theta(B^j)}(s)=
 \Big[
 \frac{d}{\ell^{3/2}}\sqrt{\frac{2}{\pi h(s)}} \exp\Big\{\frac{-d^2}{2\ell}h(s)\Big\}
 +
 \frac{\ell-d^2}{\ell^2}\erfc\Big\{\Big(\frac{d^2}{2\ell}h(s)\Big)^{1/2}\Big\}
 \Big]
  \ind_{\{t_j\le s\le t_{j+1}\}} 
  \ ,
 \end{equation}
 where
 \begin{equation*}
 h(s)=
 \begin{cases}
 (t_{j+1}-s)/(s-t_j)\ ,\quad\text{if $x_{j+1}\le x_{j}$}
 \\
 (s-t_j)/(t_{j+1}-s)\ ,\quad\text{if $x_j\le x_{j+1}$}
 \end{cases}
 \end{equation*}
 and $\erfc(\,)$ is the complementary error function $1-\erf(\,)$ .
\end{prop}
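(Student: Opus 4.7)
The plan is to reduce to a standard Brownian motion by a space-time translation, derive the joint density of the triple (endpoint, minimum, location of minimum) for the unconditioned process, condition on the endpoint to obtain (\ref{eq:fminargmin}), and finally integrate over the minimum to obtain (\ref{eq:fargmin}).

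Set $W_u := B^j_{t_j+u} - x_j$ for $u \in [0,\ell]$. Under the unconditioned law, $W$ is a standard Brownian motion with $W_0 = 0$, and $B^j$ is the law of $W$ conditioned on $W_\ell = \tilde x := x_{j+1}-x_j$; write $\tilde y := y - x_j$ and $\tilde s := s - t_j$. A standard $\Delta\downarrow 0$ limiting argument, based on the strong Markov property of $W$ at the first-passage time $\tau_{\tilde y} := \inf\{u>0:\,W_u = \tilde y\}$, expresses the joint density of $(W_\ell, m(W), \theta(W))$ at $(\tilde x, \tilde y, \tilde s)$ as the product of the first-passage density $f_{\tau_{\tilde y}}(\tilde s) = |\tilde y|\,(2\pi \tilde s^3)^{-1/2} e^{-\tilde y^2/(2\tilde s)}$ and the joint density at $(W_\ell,\min_{[\tilde s,\ell]}W) = (\tilde x, \tilde y)$ of the post-passage Brownian motion. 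The latter, by the reflection-principle formula for the joint law of endpoint and running minimum of a Brownian motion, equals $2(\tilde x - \tilde y)\,(2\pi(\ell-\tilde s)^3)^{-1/2} e^{-(\tilde x-\tilde y)^2/(2(\ell-\tilde s))}$. Multiplying these, dividing by the transition density $p_\ell(0,\tilde x) = (2\pi\ell)^{-1/2} e^{-\tilde x^2/(2\ell)}$ to pass to the bridge law, and translating back to the variables $(y,s)$ using $\tilde x^2 = d^2$, yields exactly (\ref{eq:fminargmin}).

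For the marginal (\ref{eq:fargmin}), I integrate (\ref{eq:fminargmin}) in $y$ over $(-\infty,\,x_j\wedge x_{j+1})$. Setting $\alpha := 1/(s-t_j)$, $\beta := 1/(t_{j+1}-s)$, $\gamma := \alpha+\beta = \ell/[(s-t_j)(t_{j+1}-s)]$, and $\mu := (\alpha x_j + \beta x_{j+1})/\gamma$ (the conditional mean of $B^j_s$), completing the square in the exponent reduces it to $-\gamma(y-\mu)^2/2$, because $d^2/(2\ell)$ cancels exactly against the contribution $\alpha\beta d^2/(2\gamma) = d^2/(2\ell)$ produced by the square. The substitution $z := \sqrt{\gamma}(y-\mu)$ then turns the integral into $\int_{-\infty}^{u_0} Q(z)e^{-z^2/2}\,dz$ with $u_0 := \sqrt{\gamma}(x_j\wedge x_{j+1}-\mu) < 0$ and $Q$ a quadratic polynomial whose coefficients are explicit in $(x_j - \mu, x_{j+1} - \mu)$. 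Three elementary truncated-Gaussian moments combine into a $\Phi(u_0)$-piece and an $e^{-u_0^2/2}$-piece, which may be rewritten using $\erfc$ and an exponential respectively. A direct verification shows $u_0^2 = d^2 h(s)/\ell$ in both cases $x_j\le x_{j+1}$ and $x_{j+1}\le x_j$, with the case split absorbed in the definition of $h$; collecting constants produces (\ref{eq:fargmin}).

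The main obstacle will be the factor of $2$ arising from the reflection principle in the post-passage step: it is easy to miss in a quick heuristic derivation (the naive product of two first-passage densities, normalised by the transition density, is off by a factor of $2$) and is precisely what produces the prefactor $\sqrt{2\ell/\pi}$ rather than $\sqrt{\ell/(2\pi)}$ in (\ref{eq:fminargmin}). Once the joint density is correct, the marginal integration is a routine truncated-Gaussian calculation whose only delicate point is checking that both orderings of $x_j$ and $x_{j+1}$ lead to the same final expression through the piecewise definition of $h(s)$.
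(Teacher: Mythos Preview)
Your proposal is correct and follows essentially the same route as the paper: obtain the joint density of $(W_\ell,m,\theta)$ for unconditioned Brownian motion, divide by the transition density to pass to the bridge, then integrate out $y$ with a case split on the sign of $x_{j+1}-x_j$. The only difference is cosmetic: the paper simply quotes the triple density from Karatzas--Shreve \cite[Prop.~2.8.15]{MR1121940} (whose proof is precisely the strong-Markov/first-passage argument you outline), and then writes ``integrating out $y$'' without displaying the completion-of-the-square details that you sketch.
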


\begin{proof}
  The joint law of $W_t$, with $W$ a standard Brownian motion $W=\{W_s,\ s\ge 0\}$ 
  starting at $W_0=a$, its minimum $m_t$ up to time $t$ and the location $\theta_t$
  of this minimum, is known to have the density
  \begin{equation}\label{joint3}
  \begin{split}
  &P_a\{W_t\in db,\ m_t\in dy,\ \theta_t\in ds\}=
  \\
  &\frac{(a-y)(b-y)}{\pi\sqrt{s^3 (t-s)^3}}
  \exp\Big\{ -\frac{(a-y)^2}{2s}-\frac{(b-y)^2}{2(t-s)}\Big\}
  \ind_{\{y< a,\ y< b,\ 0\le s\le t\}}
  \end{split}
  \end{equation}
  (see Karatzas-Shreve \cite[Prop. 2.8.15]{MR1121940}, or Csáki et al. \cite{MR891709}, where
  it is extended to general diffusions); the formula is usually stated for the maximum, but 
  (\ref{joint3}) is easily deduced by symmetry, taking into account that $-W$ is a Brownian 
  motion with starting value $-a$ at time 0.
  
  Consequently, if $W$ starts instead at time $u<t$, we have
  \begin{equation*}
  \begin{split}
   &P_{(u,a)}\{W_t\in db,\ m_{t}\in dy,\ \theta_{t}\in ds\}=
   \\
   &\frac{(a-y)(b-y)}{\pi\sqrt{(s-u)^3 (t-s)^3}}
   \exp\Big\{ -\frac{(a-y)^2}{2(s-u)}-\frac{(b-y)^2}{2(t-s)}\Big\}
   \ind_{\{y< a,\ y< b,\ u\le s\le t\}}
   \ .
   \end{split}
  \end{equation*}
  
  Conditioning to $\{W_t=b\}$, one finds the joint density of the minimum $m$ and its location
  $\theta$ for a Brownian bridge $B$ joining the points $(u,a)$ and $(t,b)$:
  \begin{equation*}%\label{eq:joint.ys}
  \begin{split}
  &P_{(u,a),(t,b)}\{m\in dy,\ \theta\in ds\}=
  \\
  &\frac{(a-y)(b-y)\sqrt{2(t-u)}}{\sqrt{\pi (s-u)^3 (t-s)^3}}
  \exp\Big\{ \frac{(b-a)^2}{2(t-u)}-\frac{(a-y)^2}{2(s-u)}-\frac{(b-y)^2}{2(t-s)}\Big\}
  \ind_{\{y< a,\ y< b,\ u\le s\le t\}}
  \ ,
  \end{split}
  \end{equation*}
which is equivalent to (\ref{eq:fminargmin}).
  
  Integrating out $y$, we get, if $a<b$,
  \begin{align*}
  P_{(u,a),(t,b)}\{\theta\in ds\}=
  &\Bigg[  
  \frac{b-a}{(t-u)^2}\sqrt{\frac{2(t-u)(t-s)}{\pi(s-u)}}
  \exp\Big\{ \frac{-(b-a)^2(t-s)}{2(t-u)(s-u)}\Big\}
  \\
  &+\frac{(t-u)-(b-a)^2}{(t-u)^2}
  \erfc\Big\{(b-a)\sqrt\frac{t-s}{2(t-u)(s-u)}\Big\}
  \Bigg]
   \ind_{\{u\le s\le t\}}
  \ ,
  \end{align*}
  and, in case $a>b$, 
  \begin{align*}
  P_{(u,a),(t,b)}\{\theta\in ds\}=
  &\Bigg[\frac{a-b}{(t-u)^2}\sqrt{\frac{2(t-u)(s-u)}{\pi(t-s)}}
  \exp\Big\{ \frac{-(b-a)^2(s-u)}{2(t-u)(t-s)}\Big\}
  \\
  &+\frac{(t-u)-(b-a)^2}{(t-u)^2}
  \erfc\Big\{(a-b)\sqrt\frac{s-u}{2(t-u)(t-s)}\Big\}
  \Bigg]
  \ind_{\{u\le s\le t\}}
  \ ,
  \end{align*}
  from where we get (\ref{eq:fargmin}). 
\end{proof}

\begin{prop}
The location of the minimum $\theta(B^j)$ conditioned to $m(B^j)$ has a density of the form
\begin{equation}\label{eq:fargmincond}
  f_{{\theta(B^j)}_{|_{\scriptstyle m(B^j)=y}}} (s)
  =
  C(y)(s-t_j)^{-3/2}(t_{j+1}-s)^{-3/2}
  \exp\Big\{-\frac{A(y)}{2(s-t_j)}-\frac{B(y)}{2(t_{j+1}-s)}\Big\}
  \cdot
  \ind_{\{t_j\le s\le t_{j+1}\}}    
\end{equation}
 for $y<x_j,\, y<x_{j+1}$, where $A$, $B$ and $C$ are positive constants depending only on $y$.
\end{prop}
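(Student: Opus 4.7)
The plan is simply to compute the conditional density as the ratio of the joint density of $(m(B^j),\theta(B^j))$, already obtained in (\ref{eq:fminargmin}), to the marginal density of $m(B^j)$, obtained from (\ref{eq-fbridge}) by adapting the formula to the interval $[t_j,t_{j+1}]$ and endpoint values $x_j,x_{j+1}$. This yields
\begin{equation*}
f_{m(B^j)}(y)=\frac{2}{\ell}(x_j+x_{j+1}-2y)\exp\Big\{\frac{-2(x_j-y)(x_{j+1}-y)}{\ell}\Big\}\ind_{\{y<x_j,\,y<x_{j+1}\}}\ .
\end{equation*}

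Then I would form the quotient
\begin{equation*}
f_{\theta(B^j)_{|m(B^j)=y}}(s)=\frac{f_{(m(B^j),\theta(B^j))}(y,s)}{f_{m(B^j)}(y)}
\end{equation*}
on the set $\{y<x_j,\,y<x_{j+1}\}$. The factors depending on $s$ in the joint density are exactly $(s-t_j)^{-3/2}(t_{j+1}-s)^{-3/2}$ together with the exponential $\exp\{-(x_j-y)^2/[2(s-t_j)]-(x_{j+1}-y)^2/[2(t_{j+1}-s)]\}$, while the entire prefactor and the exponential $\exp\{d^2/(2\ell)\}$ depend only on $y$ (and on the fixed data $t_j,t_{j+1},x_j,x_{j+1}$). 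Dividing by $f_{m(B^j)}(y)$ does not affect any $s$-dependence, so one reads off directly
\begin{equation*}
A(y)=(x_j-y)^2,\qquad B(y)=(x_{j+1}-y)^2,
\end{equation*}
and
\begin{equation*}
C(y)=\frac{(x_j-y)(x_{j+1}-y)\sqrt{2\ell^3}}{2\sqrt{\pi}\,(x_j+x_{j+1}-2y)}\exp\Big\{\frac{d^2}{2\ell}+\frac{2(x_j-y)(x_{j+1}-y)}{\ell}\Big\}\ .
\end{equation*}

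Finally, I would check positivity of the three constants in the stated region $y<x_j,\,y<x_{j+1}$: $A(y)$ and $B(y)$ are squares, and are strictly positive there; in $C(y)$ the factor $(x_j-y)(x_{j+1}-y)$ is a product of two positive numbers, $x_j+x_{j+1}-2y>0$ since both differences are positive, and the exponential is positive. The indicator $\ind_{\{t_j\le s\le t_{j+1}\}}$ comes directly from the joint density.

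There is no real obstacle: the result is a one-line calculation once the joint density (\ref{eq:fminargmin}) and the marginal (\ref{eq-fbridge}) are in hand. The only thing to be careful about is bookkeeping of the factors that depend only on $y$ versus those that depend on $s$, so as to identify $A(y),B(y),C(y)$ cleanly; the constant $C(y)$ need not be computed explicitly for the statement, but its positivity must be verified.
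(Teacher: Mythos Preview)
Your proposal is correct and follows exactly the same route as the paper: form the quotient of the joint density (\ref{eq:fminargmin}) by the marginal (\ref{eq-fbridge}), and read off $A(y)=(x_j-y)^2$, $B(y)=(x_{j+1}-y)^2$, and $C(y)$. Your expression for $C(y)$ is algebraically equivalent to the paper's, since $d^2+4(x_j-y)(x_{j+1}-y)=(x_j+x_{j+1}-2y)^2$ and $\sqrt{2\ell^3}/(2\sqrt{\pi})=\ell^{3/2}/\sqrt{2\pi}$.
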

\begin{proof}
  This is an immediate computation from the joint density (\ref{eq:fminargmin}) and the marginal 
  (\ref{eq-fbridge}), yielding (\ref{eq:fargmincond}) with 
\begin{equation*}
\begin{split}
  &A(y)=(x_j-y)^2\ ,\quad B(y)=(x_{j+1}-y)^2 \ ,
  \\ 
  &C(y)=\frac{(t_{j+1}-t_j)^{3/2}(x_j-y)(x_{j+1}-y)}{\sqrt{2\pi}(x_j+x_{j+1}-2y)}
  \exp\Big\{\frac{(x_{j+1}+x_j-2y)^2}{2(t_{j+1}-t_j)}\Big\}
  \ .
  \end{split}
\end{equation*}
\end{proof}
  Notice that the density (\ref{eq:fargmin}) is not bounded as $h(s)$ goes to zero at one of the end-points
  of the interval $[t_j,t_{j+1}]$. Therefore, it is not easy to sample exactly from it. 
  However, the conditional 
  density (\ref{eq:fargmincond}) given a value $y<\min\{x_j,x_{j+1}\}$ it is bounded, which makes it more
  amenable to the acceptance/rejection method   (see e.g. Asmussen and Glynn \cite{MR2331321}).
  The result is a sample from the joint density of the minimum and its location, from where 
  one obtains a sample of the marginal law of the location. This trick, together with Proposition 
  \ref{le:thx.thbj}, will allow us
  to simulate the location of the minimum of the whole process $X$, conditioned to pass through 
  the given set of points.
  
  To apply acceptance/rejection by comparison with a uniform distribution, the global maximum of the 
  function (\ref{eq:fargmincond}) should be easily calculated or approximated from above. 
  This is indeed the case:      
  There are two obvious minima at the end-points $t_j$ and $t_{j+1}$; 
the remaining extremal points are the roots of the 3-degree polynomial
\begin{equation*}
\textstyle
3C(y)\big[(s-t_j)^2(t_{j+1}-s)-(s-t_j)(t_{j+1}-s)^2\big]
+A(y)(t_{j+1}-s)^2
-B(y)(s-t_j)^2
\ ,
\end{equation*}
as can be
seen by differentiating in $s$ and multiplying by $2(s-t_j)^{7/2}(t_{j+1}-s)^{7/2}$. 
This polynomial may have one or three real roots, corresponding to a
unique maximum, or to two maxima, with a minimum in between. 
In any case, the global maximum can be computed exactly and the acceptance/rejection method
can be
implemented for this density. 

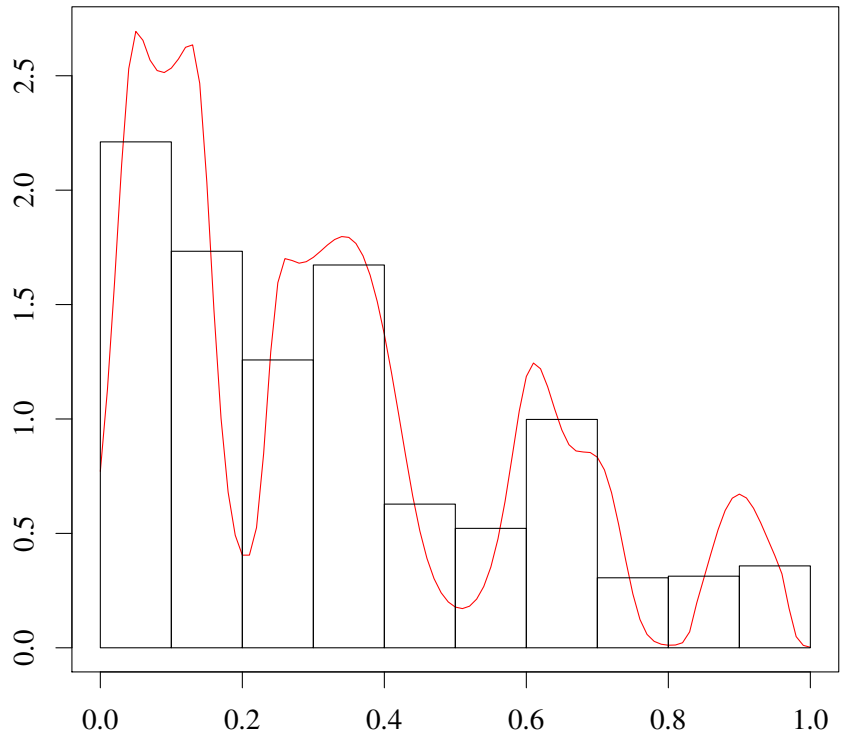
\begin{figure}[t]
  \begin{center}
    % Created by tikzDevice version 0.8.1 on 2015-08-17 19:41:44
% !TEX encoding = UTF-8 Unicode
\begin{tikzpicture}[x=1pt,y=1pt]
\definecolor{fillColor}{RGB}{255,255,255}
%\path[use as bounding box,fill=fillColor,fill opacity=0.00] (0,0) rectangle (361.35,361.35);
\path[use as bounding box,fill=fillColor,fill opacity=0.00] (50,50) rectangle (311.35,311.35);
\begin{scope}
\path[clip] ( 49.20, 61.20) rectangle (336.15,312.15);
\definecolor{drawColor}{RGB}{255,0,0}

\path[draw=drawColor,line width= 0.4pt,line join=round,line cap=round] ( 59.83,136.84) --
	( 62.48,167.61) --
	( 65.14,207.97) --
	( 67.80,252.57) --
	( 70.46,288.78) --
	( 73.11,302.86) --
	( 75.77,299.51) --
	( 78.43,292.03) --
	( 81.08,288.09) --
	( 83.74,287.34) --
	( 86.40,289.02) --
	( 89.05,292.43) --
	( 91.71,296.86) --
	( 94.37,297.78) --
	( 97.02,283.29) --
	( 99.68,246.01) --
	(102.34,197.81) --
	(105.00,156.88) --
	(107.65,129.02) --
	(110.31,112.77) --
	(112.97,105.26) --
	(115.62,105.26) --
	(118.28,115.64) --
	(120.94,143.56) --
	(123.59,181.87) --
	(126.25,208.03) --
	(128.91,217.14) --
	(131.57,216.43) --
	(134.22,215.41) --
	(136.88,215.97) --
	(139.54,217.62) --
	(142.19,219.89) --
	(144.85,222.30) --
	(147.51,224.34) --
	(150.16,225.46) --
	(152.82,225.14) --
	(155.48,222.85) --
	(158.13,218.19) --
	(160.79,210.90) --
	(163.45,200.95) --
	(166.11,188.56) --
	(168.76,174.23) --
	(171.42,158.67) --
	(174.08,142.76) --
	(176.73,127.53) --
	(179.39,114.41) --
	(182.05,104.05) --
	(184.70, 96.41) --
	(187.36, 91.07) --
	(190.02, 87.60) --
	(192.67, 85.66) --
	(195.33, 85.09) --
	(197.99, 86.00) --
	(200.65, 88.67) --
	(203.30, 93.40) --
	(205.96,100.75) --
	(208.62,111.35) --
	(211.27,125.47) --
	(213.93,142.40) --
	(216.59,159.64) --
	(219.24,172.67) --
	(221.90,177.74) --
	(224.56,175.57) --
	(227.22,168.80) --
	(229.87,160.31) --
	(232.53,152.47) --
	(235.19,146.92) --
	(237.84,144.57) --
	(240.50,144.18) --
	(243.16,143.97) --
	(245.81,142.21) --
	(248.47,137.42) --
	(251.13,128.82) --
	(253.78,116.84) --
	(256.44,103.28) --
	(259.10, 90.64) --
	(261.76, 80.99) --
	(264.41, 75.35) --
	(267.07, 72.73) --
	(269.73, 71.64) --
	(272.38, 71.27) --
	(275.04, 71.35) --
	(277.70, 72.21) --
	(280.35, 76.31) --
	(283.01, 87.23) --
	(285.67, 96.53) --
	(288.32,105.86) --
	(290.98,114.82) --
	(293.64,122.11) --
	(296.30,126.75) --
	(298.95,128.28) --
	(301.61,126.82) --
	(304.27,122.95) --
	(306.92,117.51) --
	(309.58,111.31) --
	(312.24,105.07) --
	(314.89, 98.19) --
	(317.55, 85.17) --
	(320.21, 74.52) --
	(322.87, 71.22) --
	(325.52, 70.49);
\end{scope}
\begin{scope}
\path[clip] (  0.00,  0.00) rectangle (361.35,361.35);
\definecolor{drawColor}{RGB}{0,0,0}

\path[draw=drawColor,line width= 0.4pt,line join=round,line cap=round] ( 59.83, 61.20) -- (325.52, 61.20);

\path[draw=drawColor,line width= 0.4pt,line join=round,line cap=round] ( 59.83, 61.20) -- ( 59.83, 55.20);

\path[draw=drawColor,line width= 0.4pt,line join=round,line cap=round] (112.97, 61.20) -- (112.97, 55.20);

\path[draw=drawColor,line width= 0.4pt,line join=round,line cap=round] (166.11, 61.20) -- (166.11, 55.20);

\path[draw=drawColor,line width= 0.4pt,line join=round,line cap=round] (219.24, 61.20) -- (219.24, 55.20);

\path[draw=drawColor,line width= 0.4pt,line join=round,line cap=round] (272.38, 61.20) -- (272.38, 55.20);

\path[draw=drawColor,line width= 0.4pt,line join=round,line cap=round] (325.52, 61.20) -- (325.52, 55.20);

\node[text=drawColor,anchor=base,inner sep=0pt, outer sep=0pt, scale=  1.00] at ( 59.83, 39.60) {0.0};

\node[text=drawColor,anchor=base,inner sep=0pt, outer sep=0pt, scale=  1.00] at (112.97, 39.60) {0.2};

\node[text=drawColor,anchor=base,inner sep=0pt, outer sep=0pt, scale=  1.00] at (166.11, 39.60) {0.4};

\node[text=drawColor,anchor=base,inner sep=0pt, outer sep=0pt, scale=  1.00] at (219.24, 39.60) {0.6};

\node[text=drawColor,anchor=base,inner sep=0pt, outer sep=0pt, scale=  1.00] at (272.38, 39.60) {0.8};

\node[text=drawColor,anchor=base,inner sep=0pt, outer sep=0pt, scale=  1.00] at (325.52, 39.60) {1.0};

\path[draw=drawColor,line width= 0.4pt,line join=round,line cap=round] ( 49.20, 70.30) -- ( 49.20,286.13);

\path[draw=drawColor,line width= 0.4pt,line join=round,line cap=round] ( 49.20, 70.30) -- ( 43.20, 70.30);

\path[draw=drawColor,line width= 0.4pt,line join=round,line cap=round] ( 49.20,113.46) -- ( 43.20,113.46);

\path[draw=drawColor,line width= 0.4pt,line join=round,line cap=round] ( 49.20,156.63) -- ( 43.20,156.63);

\path[draw=drawColor,line width= 0.4pt,line join=round,line cap=round] ( 49.20,199.80) -- ( 43.20,199.80);

\path[draw=drawColor,line width= 0.4pt,line join=round,line cap=round] ( 49.20,242.96) -- ( 43.20,242.96);

\path[draw=drawColor,line width= 0.4pt,line join=round,line cap=round] ( 49.20,286.13) -- ( 43.20,286.13);

\node[text=drawColor,rotate= 90.00,anchor=base,inner sep=0pt, outer sep=0pt, scale=  1.00] at ( 34.80, 70.30) {0.0};

\node[text=drawColor,rotate= 90.00,anchor=base,inner sep=0pt, outer sep=0pt, scale=  1.00] at ( 34.80,113.46) {0.5};

\node[text=drawColor,rotate= 90.00,anchor=base,inner sep=0pt, outer sep=0pt, scale=  1.00] at ( 34.80,156.63) {1.0};

\node[text=drawColor,rotate= 90.00,anchor=base,inner sep=0pt, outer sep=0pt, scale=  1.00] at ( 34.80,199.80) {1.5};

\node[text=drawColor,rotate= 90.00,anchor=base,inner sep=0pt, outer sep=0pt, scale=  1.00] at ( 34.80,242.96) {2.0};

\node[text=drawColor,rotate= 90.00,anchor=base,inner sep=0pt, outer sep=0pt, scale=  1.00] at ( 34.80,286.13) {2.5};

\path[draw=drawColor,line width= 0.4pt,line join=round,line cap=round] ( 49.20, 61.20) --
	(336.15, 61.20) --
	(336.15,312.15) --
	( 49.20,312.15) --
	( 49.20, 61.20);
\end{scope}
\begin{scope}
\path[clip] ( 49.20, 61.20) rectangle (336.15,312.15);
\definecolor{drawColor}{RGB}{0,0,0}

\path[draw=drawColor,line width= 0.4pt,line join=round,line cap=round] ( 59.83, 70.30) rectangle ( 86.40,261.18);

\path[draw=drawColor,line width= 0.4pt,line join=round,line cap=round] ( 86.40, 70.30) rectangle (112.97,219.91);

\path[draw=drawColor,line width= 0.4pt,line join=round,line cap=round] (112.97, 70.30) rectangle (139.54,178.90);

\path[draw=drawColor,line width= 0.4pt,line join=round,line cap=round] (139.54, 70.30) rectangle (166.11,214.73);

\path[draw=drawColor,line width= 0.4pt,line join=round,line cap=round] (166.11, 70.30) rectangle (192.67,124.51);

\path[draw=drawColor,line width= 0.4pt,line join=round,line cap=round] (192.67, 70.30) rectangle (219.24,115.36);

\path[draw=drawColor,line width= 0.4pt,line join=round,line cap=round] (219.24, 70.30) rectangle (245.81,156.46);

\path[draw=drawColor,line width= 0.4pt,line join=round,line cap=round] (245.81, 70.30) rectangle (272.38, 96.71);

\path[draw=drawColor,line width= 0.4pt,line join=round,line cap=round] (272.38, 70.30) rectangle (298.95, 97.32);

\path[draw=drawColor,line width= 0.4pt,line join=round,line cap=round] (298.95, 70.30) rectangle (325.52,101.20);
\end{scope}
\end{tikzpicture}
  \end{center}  
  \caption{Density estimation for the location of the minimum of the Brownian bridge conditioned
    to the points $(0,0), (0.2, 0.06), (0.5, 0.16), (0.8, 0.26), (1, 0.20)$ .}  \label{fig:histograma-arg-1} 
\end{figure}  

In Figure \ref{fig:histograma-arg-1}, we see the result of a simulation of size 10\,000,
with a density estimation using the \verb|logsplines| method in \verb|R|. As it was remarked in Section
\ref{sec:prelim}, once we are considering the minimum of the whole process, the different
bridges are no longer independent; in particular, the shape of the density in each subinterval
is not the one to be expected from formula (\ref{eq:fargmin}), and in fact it is quite difficult
to predict from the conditioning values. Hence the interest to have an 
exact simulation method.

As a more clear example of the last remark, consider the concatenation of two symmetric
bridges, from $(0,0)$ to $(0.5,0)$, and from $(0.5,0)$ to $(1,0)$. Separately, the location of their 
minima follows a uniform distribution; however, the location of the global
minimum follows the density simulated in Figure \ref{fig:histograma-arg-4}. The fact that the minimum of the two
minima tends to take a lower value than a single minimum drifts away its location from the
end-points of the subintervals.

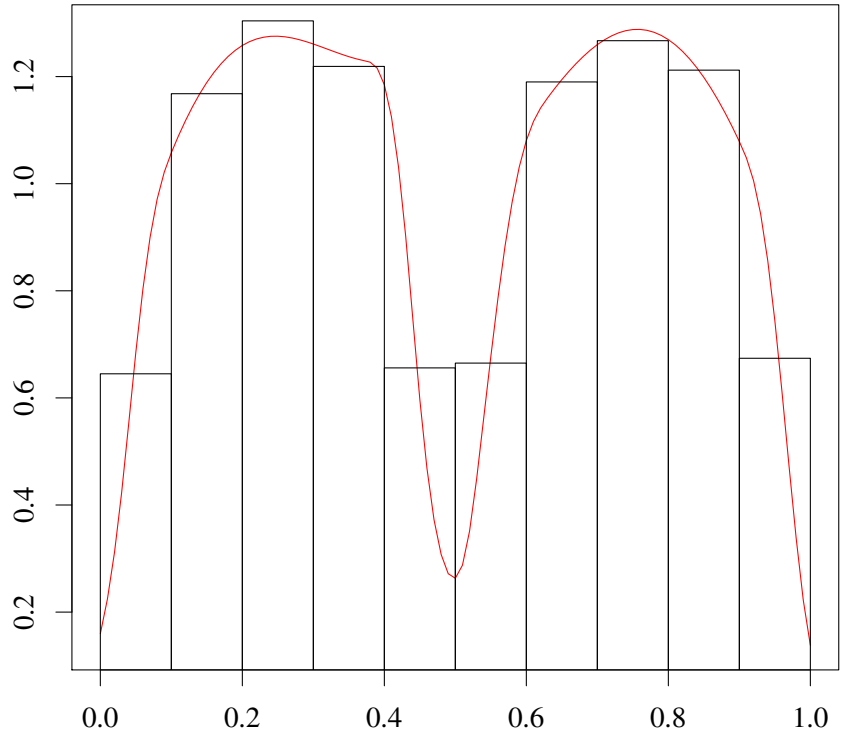
\begin{figure}[t]
  \begin{center}
    % Created by tikzDevice version 0.8.1 on 2015-08-17 20:32:26
% !TEX encoding = UTF-8 Unicode
\begin{tikzpicture}[x=1pt,y=1pt]
\definecolor{fillColor}{RGB}{255,255,255}
%\path[use as bounding box,fill=fillColor,fill opacity=0.00] (0,0) rectangle (361.35,361.35);
\path[use as bounding box,fill=fillColor,fill opacity=0.00] (50,50) rectangle (311.35,311.35);
\begin{scope}
\path[clip] ( 49.20, 61.20) rectangle (336.15,312.15);
\definecolor{drawColor}{RGB}{255,0,0}

\path[draw=drawColor,line width= 0.4pt,line join=round,line cap=round] ( 59.83, 74.85) --
	( 62.48, 87.92) --
	( 65.14,105.60) --
	( 67.80,128.06) --
	( 70.46,154.21) --
	( 73.11,181.02) --
	( 75.77,204.94) --
	( 78.43,224.32) --
	( 81.08,238.77) --
	( 83.74,248.96) --
	( 86.40,256.34) --
	( 89.05,262.65) --
	( 91.71,268.46) --
	( 94.37,273.78) --
	( 97.02,278.58) --
	( 99.68,282.86) --
	(102.34,286.63) --
	(105.00,289.89) --
	(107.65,292.66) --
	(110.31,294.96) --
	(112.97,296.81) --
	(115.62,298.23) --
	(118.28,299.26) --
	(120.94,299.92) --
	(123.59,300.26) --
	(126.25,300.29) --
	(128.91,300.07) --
	(131.57,299.63) --
	(134.22,299.00) --
	(136.88,298.23) --
	(139.54,297.35) --
	(142.19,296.39) --
	(144.85,295.40) --
	(147.51,294.41) --
	(150.16,293.46) --
	(152.82,292.57) --
	(155.48,291.79) --
	(158.13,291.15) --
	(160.79,290.50) --
	(163.45,288.20) --
	(166.11,281.96) --
	(168.76,269.90) --
	(171.42,250.88) --
	(174.08,225.00) --
	(176.73,194.02) --
	(179.39,163.21) --
	(182.05,137.27) --
	(184.70,117.83) --
	(187.36,104.77) --
	(190.02, 97.49) --
	(192.67, 95.79) --
	(195.33,100.68) --
	(197.99,113.52) --
	(200.65,133.28) --
	(203.30,156.30) --
	(205.96,179.46) --
	(208.62,201.43) --
	(211.27,221.10) --
	(213.93,237.71) --
	(216.59,250.95) --
	(219.24,260.93) --
	(221.90,268.11) --
	(224.56,273.19) --
	(227.22,277.07) --
	(229.87,280.59) --
	(232.53,283.92) --
	(235.19,287.05) --
	(237.84,289.95) --
	(240.50,292.61) --
	(243.16,295.01) --
	(245.81,297.12) --
	(248.47,298.93) --
	(251.13,300.42) --
	(253.78,301.58) --
	(256.44,302.38) --
	(259.10,302.81) --
	(261.76,302.86) --
	(264.41,302.51) --
	(267.07,301.76) --
	(269.73,300.59) --
	(272.38,299.00) --
	(275.04,297.00) --
	(277.70,294.56) --
	(280.35,291.71) --
	(283.01,288.44) --
	(285.67,284.75) --
	(288.32,280.66) --
	(290.98,276.18) --
	(293.64,271.32) --
	(296.30,266.10) --
	(298.95,260.54) --
	(301.61,254.34) --
	(304.27,245.80) --
	(306.92,233.24) --
	(309.58,215.61) --
	(312.24,192.91) --
	(314.89,166.31) --
	(317.55,138.05) --
	(320.21,110.92) --
	(322.87, 87.53) --
	(325.52, 70.49);
\end{scope}
\begin{scope}
\path[clip] (  0.00,  0.00) rectangle (361.35,361.35);
\definecolor{drawColor}{RGB}{0,0,0}

\path[draw=drawColor,line width= 0.4pt,line join=round,line cap=round] ( 59.83, 61.20) -- (325.52, 61.20);

\path[draw=drawColor,line width= 0.4pt,line join=round,line cap=round] ( 59.83, 61.20) -- ( 59.83, 55.20);

\path[draw=drawColor,line width= 0.4pt,line join=round,line cap=round] (112.97, 61.20) -- (112.97, 55.20);

\path[draw=drawColor,line width= 0.4pt,line join=round,line cap=round] (166.11, 61.20) -- (166.11, 55.20);

\path[draw=drawColor,line width= 0.4pt,line join=round,line cap=round] (219.24, 61.20) -- (219.24, 55.20);

\path[draw=drawColor,line width= 0.4pt,line join=round,line cap=round] (272.38, 61.20) -- (272.38, 55.20);

\path[draw=drawColor,line width= 0.4pt,line join=round,line cap=round] (325.52, 61.20) -- (325.52, 55.20);

\node[text=drawColor,anchor=base,inner sep=0pt, outer sep=0pt, scale=  1.00] at ( 59.83, 39.60) {0.0};

\node[text=drawColor,anchor=base,inner sep=0pt, outer sep=0pt, scale=  1.00] at (112.97, 39.60) {0.2};

\node[text=drawColor,anchor=base,inner sep=0pt, outer sep=0pt, scale=  1.00] at (166.11, 39.60) {0.4};

\node[text=drawColor,anchor=base,inner sep=0pt, outer sep=0pt, scale=  1.00] at (219.24, 39.60) {0.6};

\node[text=drawColor,anchor=base,inner sep=0pt, outer sep=0pt, scale=  1.00] at (272.38, 39.60) {0.8};

\node[text=drawColor,anchor=base,inner sep=0pt, outer sep=0pt, scale=  1.00] at (325.52, 39.60) {1.0};

\path[draw=drawColor,line width= 0.4pt,line join=round,line cap=round] ( 49.20, 83.01) -- ( 49.20,285.06);

\path[draw=drawColor,line width= 0.4pt,line join=round,line cap=round] ( 49.20, 83.01) -- ( 43.20, 83.01);

\path[draw=drawColor,line width= 0.4pt,line join=round,line cap=round] ( 49.20,123.42) -- ( 43.20,123.42);

\path[draw=drawColor,line width= 0.4pt,line join=round,line cap=round] ( 49.20,163.83) -- ( 43.20,163.83);

\path[draw=drawColor,line width= 0.4pt,line join=round,line cap=round] ( 49.20,204.24) -- ( 43.20,204.24);

\path[draw=drawColor,line width= 0.4pt,line join=round,line cap=round] ( 49.20,244.65) -- ( 43.20,244.65);

\path[draw=drawColor,line width= 0.4pt,line join=round,line cap=round] ( 49.20,285.06) -- ( 43.20,285.06);

\node[text=drawColor,rotate= 90.00,anchor=base,inner sep=0pt, outer sep=0pt, scale=  1.00] at ( 34.80, 83.01) {0.2};

\node[text=drawColor,rotate= 90.00,anchor=base,inner sep=0pt, outer sep=0pt, scale=  1.00] at ( 34.80,123.42) {0.4};

\node[text=drawColor,rotate= 90.00,anchor=base,inner sep=0pt, outer sep=0pt, scale=  1.00] at ( 34.80,163.83) {0.6};

\node[text=drawColor,rotate= 90.00,anchor=base,inner sep=0pt, outer sep=0pt, scale=  1.00] at ( 34.80,204.24) {0.8};

\node[text=drawColor,rotate= 90.00,anchor=base,inner sep=0pt, outer sep=0pt, scale=  1.00] at ( 34.80,244.65) {1.0};

\node[text=drawColor,rotate= 90.00,anchor=base,inner sep=0pt, outer sep=0pt, scale=  1.00] at ( 34.80,285.06) {1.2};

\path[draw=drawColor,line width= 0.4pt,line join=round,line cap=round] ( 49.20, 61.20) --
	(336.15, 61.20) --
	(336.15,312.15) --
	( 49.20,312.15) --
	( 49.20, 61.20);
\end{scope}
\begin{scope}
\path[clip] ( 49.20, 61.20) rectangle (336.15,312.15);
\definecolor{drawColor}{RGB}{0,0,0}

\path[draw=drawColor,line width= 0.4pt,line join=round,line cap=round] ( 59.83, 42.60) rectangle ( 86.40,172.92);

\path[draw=drawColor,line width= 0.4pt,line join=round,line cap=round] ( 86.40, 42.60) rectangle (112.97,278.60);

\path[draw=drawColor,line width= 0.4pt,line join=round,line cap=round] (112.97, 42.60) rectangle (139.54,306.08);

\path[draw=drawColor,line width= 0.4pt,line join=round,line cap=round] (139.54, 42.60) rectangle (166.11,288.90);

\path[draw=drawColor,line width= 0.4pt,line join=round,line cap=round] (166.11, 42.60) rectangle (192.67,175.15);

\path[draw=drawColor,line width= 0.4pt,line join=round,line cap=round] (192.67, 42.60) rectangle (219.24,176.96);

\path[draw=drawColor,line width= 0.4pt,line join=round,line cap=round] (219.24, 42.60) rectangle (245.81,283.04);

\path[draw=drawColor,line width= 0.4pt,line join=round,line cap=round] (245.81, 42.60) rectangle (272.38,298.60);

\path[draw=drawColor,line width= 0.4pt,line join=round,line cap=round] (272.38, 42.60) rectangle (298.95,287.49);

\path[draw=drawColor,line width= 0.4pt,line join=round,line cap=round] (298.95, 42.60) rectangle (325.52,178.78);
\end{scope}
\end{tikzpicture}
  \end{center}  
  \caption{Density estimation for the location of the global minimum of two concatenated 
    symmetric identical Brownian bridges.}  \label{fig:histograma-arg-4} 
\end{figure}  

\section{Acknowledgements}
  This work has been supported by grants numbers MTM2014-59179-C2-1-P  
  from the Ministry of Economy and Competitiveness of Spain; and 
  UNAB10-4E-378, co-funded by the European Regional Development Fund (ERDF). 
  
  We would also like to thank prof.~Jim Pitman for suggestions on the computations
  of Section \ref{sec:SimLoc}.

\bibliographystyle{plain}
%\bibliography{MinCondBM}
%\begin{comment}

%\end{comment}  
\end{document}